\newtheorem{theorem}{Theorem}[section]
\newtheorem{lemma}[theorem]{Lemma}
\newtheorem{corollary}[theorem]{Corollary}
\newtheorem{prop-def}{Proposition-Definition}[section]
\newenvironment{proof}{\trivlist \item[\hskip \labelsep{\it Proof.}]}{\endtrivlist}
\begin{document}
\title{Infinite Dimensional $3$-Lie Algebras and Their Connections to Harish-Chandra Modules}
\date{}
\author{\small Ruipu  Bai\qquad Zhenheng Li\qquad Weidong Wang}
\maketitle
\vspace{-15mm}
\begin{center}
\end{center}
\parskip 2mm

\noindent{\bf Abstract:}
In this paper we construct two kinds of infinite-dimensional $3$-Lie algebras from a given commutative associative algebra, and show that they are all canonical Nambu $3$-Lie algebras. We relate their inner derivation algebras to Witt algebras, and then study the regular representations of these $3$-Lie algebras and the natural representations of the inner derivation algebras. In particular, for the second kind of $3$-Lie algebras, we find that their regular representations are Harish-Chandra modules, and the inner derivation algebras give rise to intermediate series modules of the Witt algebras and contain the smallest full toroidal Lie algebras without center.

\vspace{2mm}
\noindent{\bf Key words:}  $3$-Lie algebra, Intermediate series module, Witt algebra, Toroidal Lie algebra, Inner derivation algebra.

\vspace{2mm}
\noindent{\bf 2010 MR subject classification}: 17B05,~ 17B10, ~17D99

\section{Introduction}

Nambu \cite{Na} initiated implicitly the study of $3$-Lie algebraic structures for generalized Hamiltonian mechanics by proposing the {\it canonical Nambu bracket} for a triple of classical observables on a three dimensional
phase space with coordinates $x, y, z$
\begin{equation}\label{nb}
    [f_1, f_2, f_3]_\partial = \frac {\partial(f_1, f_2, f_3)}{\partial(x, y, z)}
\end{equation}
where the right-hand side is the Jacobian determinant of the vector function $(f_1, f_2, f_3)$.

As a generalization of Hamiltonian mechanics, Nambu mechanics attracts attention from both mathematicians and physicists. In physics, Takhtajan \cite{Ta} described some basic principles of the formalism of Nambu mechanics, and introduced the notion of {\it  Nambu-Poisson manifold}.  Bayen and Flato studied Hamltonian embedding for Nambu mechanics using Dirac's singular formulism \cite{Ba}. Mukunda and Sudarshan investigated the relationship between Nambu and Hamiltonian mechanics \cite{Mu}. Hirayama studied the realization problem of Nambu mechanics \cite{Hi}.

In mathematics, Filippov \cite{Fi} originated systematically the theory of {\it abstract $n$-Lie algebras}, generalizing both Lie algebras and the canonical Nambu bracket to a more abstract stage. In particular, he studied
the $n$-Lie algebra consisting of polynomials of $n$-variables with respect to the canonical Nambu bracket (\ref{nb}).

We call an abstract $3$-Lie algebra a {\it canonical Nambu $3$-Lie algebra} if it is isomorphic to a $3$-Lie algebra whose bracket is defined by the canonical Nambu bracket (\ref{nb}).

A natural and important question is how to determine whether a given abstract $3$-Lie algebra is a canonical Nambu $3$-Lie algebra? A similar question arises for abstract $n$-Lie algebras.  For $n\ge 3,$ Alexeevsky and Guha \cite{AG} and Marmo-Vilasi-Vinogradov \cite{MVV} showed that the $n$-Lie algebra structure on a given Nambu-Poisson manifold can be realized locally by the canonical Nambu bracket. In other words, there exists a local chart (coordinates) $x_1, \ldots, x_n$ in a neighbourhood of a point, with certain conditions, in the manifold such that the corresponding Poisson bracket of this $n$-Lie algebra is given by $(\ref{nb})$. However, there is still a long way to go to answer the question globally and completely.

Though the question is difficult to answer, there is evidence in the literature showing that the question is related to the realization problem of $3$-Lie algebras. For instance, using the traditional generating method, Ding {\it et al.}  \cite{Di} constructed a family of $3$-Lie algebras, including the $w_\infty$ $3$-Lie algebra and {\it SDiff}$\,(T^2)$. Using the present terminology, these $3$-Lie algebras are canonical Nambu $3$-Lie algebras. Bai {\it et al.} \cite{B4} realized some $3$-Lie algebras using two-dimensional extensions of metric Lie algebras; it is not clear if the bracket of these $3$-Lie algebras can be realized by (\ref{nb}). Pozhidaev \cite{Fi, Po1, Po2, Po3} obtained some simple $3$-Lie algebras; it is unknown whether these $3$-Lie algebras are canonical Nambu $3$-Lie algebras.

In this paper, we will shed some light on this question for $3$-Lie algebras. Gathering some preliminary definitions and notation about $n$-Lie algebras in section 2, we then in section 3 construct an infinite-dimensional $3$-Lie algebra from a given commutative associative algebra $A$ using a derivation of $A$. We find that this $3$-Lie algebra is a solvable canonical Nambu $3$-Lie algebra. Its regular representation is a weight module, but not a Harish-Chandra module. Furthermore, the derivation algebra of this $3$-Lie algebra is a direct sum of a Witt algebra $W$ and an irreducible $W$-module.

In section 4, we define another infinite-dimensional $3$-Lie algebra $A_\omega^\delta$ from the same commutative associative algebra $A$ using both an involution and a different derivation of $A$. We show that this $3$-Lie algebra is a simple canonical Nambu $3$-Lie algebra. Moreover, it contains a Lie subalgebra isomorphic to a Witt algebra.

In section 5, we prove that the inner derivation algebra of $A_\omega^\delta$ is a direct sum of a Witt algebra $W$ and the smallest full toroidal Lie algebra $V$ without center. Moreover, the Lie algebra $V$ is a direct sum of three isomorphic irreducible $W$-modules. 

In section 6, we show that the regular representation of $A_\omega^\delta$ is a Harish-Chandra module, and that the natural representation of the inner derivation algebra of $A_\omega^\delta$ is an intermediate series module.

From now on, let $\mathbb F$ be a field of characteristic zero unless stated otherwise, and let $\mathbb Z$ be the set of integers.

\section{Preliminaries}
We collect some basic notation and facts about $n$-Lie algebras. A vector space $L$ over a field $\mathbb F$ is an arbitrary $n$-Lie algebra [1] if there
is an $n$-ary multilinear operation $ [\ , \,\ldots, \ ] $ satisfying
the following anticommutative identity (\ref{ac}) and the generalized Jacobi identity (\ref{jacobi})
\begin{eqnarray}
  [u_1, \cdots, u_n] &=& (-1)^{\epsilon(\sigma)}[u_{\sigma (1)}, \cdots, u_{\sigma(n)}], \label{ac}\\
~[[u_1, ~\cdots, ~u_n], ~v_2, ~\cdots, v_n ] &=& \sum_{i=1}^n[u_1, ~\cdots, ~[u_i, ~v_2, ~\cdots, ~v_n], ~\cdots, ~u_n], \label{jacobi}
\end{eqnarray}
where $u_i, v_i\in L$ and $\sigma$ runs over the symmetric group, and the number $\epsilon(\sigma)$ is the parity of the permutation $\sigma.$ It is clear that $2$-Lie algebras are the regular Lie algebras.

Let $L$ be an arbitrary $n$-Lie algebra. A derivation of $L$ is a linear map
$\delta: L\rightarrow L$, such that
$$
 \delta([u_1, ~\cdots, ~u_n])=\sum_{i=1}^n [u_1, ~\cdots, ~\delta(u_i), ~\cdots, ~u_n], \quad \text{ for all } u_1, \cdots, u_n \in L.
$$
The set Der$L$ of all  derivations of $L$ is a Lie subalgebra of the general Lie algebra gl$(L)$. This subalgebra is called the derivation algebra of $L$.

The map ad$(u_1, ~\cdots, ~u_{n-1}):$ $ ~L\rightarrow ~L$ defined
by
\[
    {\rm ad}(u_1, ~\cdots, ~u_{n-1})(u_n)=[u_1, ~\cdots, ~u_{n}],\quad \text{ for all } u_1, \cdots u_n \in L.
\]
is called a left multiplication of $L$. Clearly, each left multiplication is a derivation. The set ad$L$ of all finite linear combinations of left multiplications is an ideal of Der$L$. Each element of ad$(L)$ is referred to as an inner derivation of $L$. If $u_1, \cdots, u_{n-1}, v_1, \cdots, v_{n-1} \in L$, then
\[
    [\mbox{ad}(u_1, \cdots, u_{n-1}), ~\mbox{ad}(v_1, \cdots, v_{n-1})]=\sum\limits_{i=1}^{n-1}\mbox{ad}(v_1, \cdots, [u_1, \cdots,
u_{n-1}, v_i], \cdots, v_{n-1}).
\]
If $L_{1}, L_{2},\cdots, L_{n}$ are subalgebras of $L$, let $[L_1, $ $L_{2},$  $\cdots, $ $L_{n}]$ denote the
subspace of $L$ generated by all vectors $[u_{1},$ ~$ u_{2}, $
~$\cdots, ~u_{n}]$, where $u_{i}\in L_{i}$ for $i=1, ~\cdots,
~n$.
 Let $L^1=[L, ~L, ~\cdots, ~L]$ with $n$ copies of $L$. Then $L^1$ is a subalgebra of $L$, called the derived algebra of $L$. We say that $L$ is abelian if its derived algebra is zero.

An ideal of $L$ is a subspace $I$ such that $ [I,
L, \cdots, L]\subseteq I.$  We say that $L$ is simple if $L$ is not abelian and it has no ideals other than $0$ and itself.
An ideal $I$ of $L$ is a solvable ideal, if $I^{(r)}=0$ for some $r\geq 0$, where $I^{(0)}=I$ and $I^{(s)}$ is defined by induction,
$$
I^{(s+1)}=[I^{(s)},  I^{(s)}, L,\,\cdots, L]
$$
for $0\le s\le r$. If $L$ is a solvable ideal of itself, then $L$ is called a solvable $n$-Lie algebra. An ideal $I$ of $L$ is called nilpotent, if
$I^{r}=0$ for some $r\geq 0$, where $I^{0}= I$ and $I^s$ is defined by
\[
    I^{s+1}=[I^{s}, I,  L, ~\cdots, ~L]
\]
for $0\le s\le r$. If $L$ is a nilpotent ideal of itself, we call $L$ a nilpotent $n$-Lie algebra. An ideal $I$ of $L$ is referred to as hypo-nilpotent, if $I$ is a nilpotent $n$-Lie algebra, but it is not a nilpotent ideal of $L$.

 A Cartan subalgebra of $L$ is a nilpotent subalgebra of $L$ such that for all $y\in L$, if $[y, H, L,\cdots, L]\subseteq H$, then $y\in H$.

\begin{lemma}{\rm(\cite{B4})}\label{vf}
  Let $(L, [ \,, ])$ be a Lie algebra over a field $\mathbb F$, and let $L^*$ be the dual space of $L$. Suppose $f\in L^*$ satisfies $f([u, v])=0$, for all $u, v\in L$. Then $(L, [\, , , ]_f)$ is a $3$-Lie algebra with respect to the multiplication
\begin{equation*}
[u, v, w]_{f}=f(u)[v, w]+f(v)[w, u]+f(w)[u, v], \quad \text{ for all } u, v, w\in L.
 \end{equation*}
\end{lemma}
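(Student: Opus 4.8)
The plan is to verify that the trilinear operation $[\,,,]_f$ satisfies the two defining axioms of a $3$-Lie algebra, namely the anticommutative identity (\ref{ac}) and the generalized Jacobi identity (\ref{jacobi}) with $n=3$. Trilinearity is immediate, since $f$ is linear and $[\,,]$ is bilinear. For anticommutativity I would check that interchanging any two of the three arguments produces an overall sign change; since transpositions generate the symmetric group $S_3$, it suffices to treat adjacent swaps. For instance, comparing $[v,u,w]_f$ with $[u,v,w]_f$ and applying the antisymmetry $[x,y]=-[y,x]$ of the Lie bracket term by term gives $[v,u,w]_f=-[u,v,w]_f$, and the other transposition is handled identically. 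Both of these steps are routine.

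The heart of the proof is the generalized Jacobi identity, and here the hypothesis $f([u,v])=0$ is decisive. The first observation I would record is that $f$ annihilates every $3$-bracket: since $[u,v,w]_f$ is by definition a linear combination of the Lie brackets $[v,w],[w,u],[u,v]$, the assumption forces $f([u,v,w]_f)=0$. I would then expand both sides of (\ref{jacobi}). On the left-hand side, writing $w=[u_1,u_2,u_3]_f$, the term $f(w)[v_2,v_3]$ drops out, leaving
\[
[w,v_2,v_3]_f=f(v_2)[v_3,w]+f(v_3)[w,v_2],
\]
and expanding $w$ by bilinearity produces six terms, each of the form $f(v_j)f(u_i)$ times a nested Lie bracket. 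On the right-hand side the same vanishing principle removes the ``diagonal'' contributions $f([u_i,v_2,v_3]_f)$ from each of the three summands, and expanding yields eighteen terms, which I would group by their coefficient products $f(u_i)f(u_j)$ and $f(u_i)f(v_j)$.

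The final step is the bookkeeping that matches the two sides. I expect the terms carrying a coefficient $f(u_i)f(u_j)$ (two $u$'s) to cancel in pairs on the right-hand side purely by antisymmetry of the Lie bracket, as they never occur on the left. For the mixed coefficients $f(u_i)f(v_j)$ I would collect the two contributions on the right sharing each such coefficient and combine them using the Jacobi identity $[[x,y],z]+[[y,z],x]+[[z,x],y]=0$ of $L$; this collapses each pair into a single nested bracket coinciding exactly with the corresponding term on the left-hand side. The main obstacle is purely organizational, namely tracking the signs and the identity of each of the many nested brackets. No structural difficulty beyond the Jacobi identity of $L$ and the antisymmetry of its bracket is involved, so once the hypothesis $f([u,v])=0$ is used to kill the diagonal terms, the identity follows by direct comparison.
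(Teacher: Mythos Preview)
Your argument is correct and is the standard direct verification of this result. The paper itself does not give a proof of this lemma; it is quoted from \cite{B4} and used as a black box. Your key observation that $f([u,v,w]_f)=0$ (since each $3$-bracket is a linear combination of Lie brackets, on which $f$ vanishes) is exactly what makes the computation tractable: it kills the three ``diagonal'' terms on each side, after which the $f(u_i)f(u_j)$ terms on the right cancel in pairs by antisymmetry while each mixed $f(u_i)f(v_j)$ coefficient on the right reduces via the Jacobi identity of $L$ to the single matching term on the left. There is nothing to add; the proposal is complete as stated.
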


\begin{lemma}{\rm(\cite{B5})}\label{do}
Let $\delta$ be a derivation and $\omega$ be an involution of a commutative associative algebra $A$ over $\mathbb F$. If
$\omega\delta+\delta \omega=0,$ then $A$ is a $3$-Lie algebra under the
multiplication
\begin{equation}\label{w3}
    [u, v, w]=\det  \begin{pmatrix}
                        \omega(u) & \omega(v) &\omega(w) \\
                        u & v & w  \\
                        \delta(u) & \delta(v) & \delta(w) \\
                    \end{pmatrix}, \quad \text{for all }  u, v, w\in A.
\end{equation}
\end{lemma}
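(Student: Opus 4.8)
The anticommutative identity \eqref{ac} is immediate and costs nothing: the bracket \eqref{w3} is multilinear, since each matrix entry $\omega(u),u,\delta(u)$ is linear in its argument and the determinant is multilinear in its columns, and permuting $u,v,w$ permutes the three columns, multiplying the determinant by the sign $(-1)^{\epsilon(\sigma)}$ of the permutation. Hence the entire content of the lemma is the generalized Jacobi identity \eqref{jacobi}, which for $n=3$ reads
\[ [\,[u_1,u_2,u_3],v_2,v_3\,]=\sum_{i=1}^{3}[u_1,\ldots,[u_i,v_2,v_3],\ldots,u_3]. \]

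My plan is to reformulate this as the assertion that each left multiplication is a derivation of the ternary bracket. Writing $\bar a=\omega(a)$ and $a'=\delta(a)$, a cofactor expansion of \eqref{w3} along the first row gives $[a,b,c]=\bar a(bc'-cb')+\bar b(ca'-ac')+\bar c(ab'-ba')$; collecting the terms containing $c,c',\bar c$ then shows that the operator $D_{a,b}:=[a,b,\,\cdot\,]$ has the explicit shape $D_{a,b}(c)=P\,\delta(c)+Q\,c+R\,\omega(c)$, where $P=\bar a b-\bar b a$, $Q=\bar b a'-\bar a b'$ and $R=ab'-ba'$. Using the skew-symmetry already established, identity \eqref{jacobi} is equivalent to the single statement that, for all $v_2,v_3$, the operator $D_{v_2,v_3}$ is a derivation of the bracket, i.e. $D[x,y,z]=[Dx,y,z]+[x,Dy,z]+[x,y,Dz]$. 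So it suffices to show that every operator of the form $L_P\delta+L_Q+L_R\omega$ with these particular $P,Q,R$ is a bracket derivation, where $L_f$ denotes multiplication by $f$.

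I would prove this by substituting \eqref{w3} into both sides and reducing with the four structural hypotheses: commutativity and associativity of $A$, the Leibniz rule $(xy)'=x'y+xy'$, the multiplicativity $\overline{xy}=\bar x\,\bar y$ together with $\bar{\bar x}=x$, and---decisively---the relation $\omega\delta=-\delta\omega$, i.e. $\overline{x'}=-(\bar x)'$. A convenient bookkeeping device is the $\mathbb Z_2$-grading $A=A_+\oplus A_-$ by the $\pm1$-eigenspaces of $\omega$: the hypothesis makes $\delta$ odd for this grading, so $\delta(A_\pm)\subseteq A_\mp$, and one records auxiliary parity facts such as $\omega(P)=-P$ to keep the many terms sorted. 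Note that $\delta$ by itself is not a derivation of the ternary bracket---it fails exactly by the sign introduced by $\omega\delta=-\delta\omega$ in the first row---so the cancellations cannot come from the summands separately but only from the interlocking of the coefficients $P,Q,R$.

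I expect the principal obstacle to be the volume of terms and the sign bookkeeping, rather than anything conceptual. Expanding $D_{v_2,v_3}[x,y,z]$ forces $\delta$ through the product $[x,y,z]$, producing genuine second-order $\delta^2$-contributions and cross terms that mix $\omega$ with $\delta$; these must annihilate the analogous terms generated on the right-hand side, where $\omega$ and $\delta$ act through the entries of the three determinants $[Dx,y,z]$, $[x,Dy,z]$, $[x,y,Dz]$. The hypothesis $\omega\delta+\delta\omega=0$ is precisely what turns each stray $(\bar x)'$ into $-\overline{x'}$ so that these higher-order terms pair off and vanish; keeping track of these signs correctly is where the care lies. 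Once the computation closes, both \eqref{ac} and \eqref{jacobi} hold and $A$ becomes a $3$-Lie algebra under \eqref{w3}.
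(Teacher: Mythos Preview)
The paper does not prove this lemma: it is quoted from \cite{B5} as a preliminary result and no argument is supplied, so there is nothing in the paper to compare your attempt against.

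That said, your plan is the natural direct verification and is sound. The cofactor expansion yielding $D_{a,b}(c)=P\,\delta(c)+Q\,c+R\,\omega(c)$ with $P=\bar a b-\bar b a$, $Q=\bar b a'-\bar a b'$, $R=ab'-ba'$ is correct, and recasting \eqref{jacobi} as the assertion that each left multiplication $D_{v_2,v_3}$ is a derivation of the ternary bracket is the standard reformulation. Your identification of $\omega\delta=-\delta\omega$ as the device that converts every $(\bar x)'$ into $-\overline{x'}$ is exactly what makes the cross terms cancel, and the parity observation $\omega(P)=-P$ is accurate and useful for the bookkeeping. The only caveat is that what you have written is a plan rather than an executed proof: the final cancellation involves a few dozen monomials, and while nothing conceptual is missing, the identity still has to be written out in full---or organized, for instance by $\omega$-homogeneous components of the arguments---before it can be regarded as complete.
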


To describe the connection of the structures of the $3$-Lie algebras in this paper to the representations of Witt algebras, we need some basic facts about the Witt algebras. A basis for a Witt algebra $W$ is given by the vector fields $L_n=-z^{n+1} \frac{\partial}{\partial z}$, for $n$ in $\mathbb Z$. The Lie bracket of two vector fields is given by
\[
    [L_i,L_j]=(i-j)L_{i+j}.
\]
This algebra has a central extension called the Virasoro algebra that is important in conformal field theory and string theory. A module $V$ over the Witt algebra $W$ is referred to as a weight module if it admits a weight space decomposition $V=\oplus_{V\in\mathbb F} V_\lambda$ where
\[
    V_\lambda = \{v\in V\mid L_0v = \lambda v\}.
\]
A weight module over $W$ is an intermediate series module if the dimension of every weight space is less than or equal to one.

Let $L$ be a $3$-Lie algebra and $V$ be a vector space over $\mathbb F$.
An $L$-module is a pair $(V, \rho)$ where $\rho$ is a linear mapping
$
    \rho: L^{\wedge 2}\rightarrow \mbox{End}(V)
$
satisfying
\begin{align*}
    \rho([x_1, x_2, x_3],\, y)      &= \rho(x_3,\,x_1)\rho(x_2,\, y)+ \rho(x_2,\,x_3)\rho(x_1,\, y) + \rho(x_1,\, x_2)\rho(x_3,\, y) \\
    [\rho(x_1,  x_2),\, \rho(y_1, y)] &= \rho(x_1,\, x_2)\rho(y_1,\,y) - \rho(y_1,\, y) \rho(x_1,\, x_2)\\
                                        &=\rho(y_1,\, [x_1,\, x_2,\, y]+\rho([x_1,\, x_2,\, y_1],\, y)
\end{align*}
for all $x_i, y_1, y\in L, \, i=1, 2, 3$.

A module $(V, \rho)$ of a $3$-Lie algebra $L$ is referred to as a {\it weight module} if it admits a weight space decomposition $V=\oplus_{\lambda\in\mathbb (H\wedge H)^*} V_\lambda$, where $H$ is a Cartan subalgebra of $L$, $\lambda: H\wedge H\rightarrow \mathbb F$ is a linear mapping, and
\[
    V_\lambda = \{v\in V\mid \rho(h_1, h_2)v = \lambda(h_1, h_2) v, \quad \text{for all } h_1, h_2\in H\}.
\]
A module of $L$ is a {\it Harish-Chandra} module if it is a weight module and every weight space is finite-dimensional. A Harish-Chandra module of $L$ is an {\it intermediate series module} if the dimension of every weight space is less than or equal to one.

\section{The $3$-Lie algebra $(A, [ , , ]_{f,\, k})$}\label{fk}
Let $A$ be a commutative associative algebra with a basis
$\{L_r, M_r \mid  r \in \mathbb Z \}$ and the multiplication
\begin{equation}\label{a}
    L_rL_s=L_{r+s},\quad M_rM_s=M_{r+s}, \quad L_rM_s=0, \quad\text{for all } r, s\in \mathbb Z.
\end{equation}
For each $k\in\mathbb Z$, we define a derivation $d_k$ of $A$ by
\begin{eqnarray*}
  d_k (aL_r+bM_t) = arL_{k+r},
\end{eqnarray*}
for all $a, b\in\mathbb F, ~r, t\in \mathbb Z$. Through $d_k$, each integer $k$ induces a Lie algebra structure $[\, , ]_k$ on $A$ defined by
\begin{eqnarray}\label{dms}
    [u, v]_k = d_k(u) v - u d_k(v), \quad \text{for all } u, v\in A.
\end{eqnarray}
We will show that each integer $k$ gives rise to a canonical Nambu $3$-Lie algebra structure on $A$.

To simplify notation and expressions, from now on, we assume that any bracket not listed in the multiplication of a $3$-Lie algebra is always zero.
\begin{theorem}\label{dtf}
  Let $f: A \rightarrow\mathbb F$ be a nonzero linear function with $f(L_r)=0$ for all $r\in \mathbb Z$. Then for any integer $k$, $A$ is a canonical Nambu $3$-Lie algebra in the multiplication $[\, , , ]_{f,\,k}$ defined by
\begin{equation} \label{dmm}
    [L_r, L_s, M_t]_{f,\,k}= f(M_t)(r-s) L_{r+s+k}, \quad\text{for all } L_r, L_s, M_t\in A.
\end{equation}
Furthermore, $A$ is a non-solvable $3$-Lie algebra.
\end{theorem}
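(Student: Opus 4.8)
The plan is to treat the three assertions separately: that $[\,,,]_{f,k}$ makes $A$ a $3$-Lie algebra with the bracket (\ref{dmm}), that this algebra is a canonical Nambu $3$-Lie algebra, and that it is non-solvable. For the $3$-Lie structure I would invoke Lemma \ref{vf}. Since $d_k$ is a derivation of the commutative associative algebra $A$, the operation $[u,v]_k=d_k(u)v-u\,d_k(v)$ of (\ref{dms}) is a Lie bracket on $A$; evaluating it on the basis gives $[L_r,L_s]_k=(r-s)L_{r+s+k}$ while $[M_r,M_s]_k=[L_r,M_s]_k=0$, so every value of $[\,,]_k$ lies in the span of the $L_m$. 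As $f(L_m)=0$ for all $m$, this forces $f([u,v]_k)=0$, which is exactly the hypothesis of Lemma \ref{vf}. The lemma then yields a $3$-Lie algebra $(A,[\,,,]_f)$, and the expansion $[L_r,L_s,M_t]_f=f(L_r)[L_s,M_t]_k+f(L_s)[M_t,L_r]_k+f(M_t)[L_r,L_s]_k=f(M_t)(r-s)L_{r+s+k}$, together with the vanishing of every other basis triple, identifies $[\,,,]_f$ with (\ref{dmm}).

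Non-solvability I expect to be short. Writing $\mathcal L=\mathrm{span}\{L_m:m\in\mathbb Z\}$ and choosing $t_0$ with $f(M_{t_0})\neq 0$ (possible since $f\neq 0$), the brackets $[L_r,L_s,M_{t_0}]=f(M_{t_0})(r-s)L_{r+s+k}$ produce every $L_m$ as $r,s$ range over $\mathbb Z$, so $A^{(1)}=[A,A,A]=\mathcal L$ and then $A^{(2)}=[\mathcal L,\mathcal L,A]=[\mathcal L,\mathcal L,\mathcal M]=\mathcal L$. Inductively $A^{(s)}=\mathcal L\neq 0$ for all $s\ge 1$, so the derived series never reaches $0$ and $A$ is non-solvable.

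The canonical Nambu claim is, I expect, the main obstacle. Here I would try to exhibit functions of three variables $x,y,z$ whose Jacobian bracket (\ref{nb}) reproduces (\ref{dmm}). The natural model for the Witt-type part is $L_r\mapsto y\,e^{(r+k)x}$, for which one checks $\{y e^{(r+k)x},\,y e^{(s+k)x},\,z\}_\partial=(r-s)\,y e^{(r+s+2k)x}$, matching $(r-s)L_{r+s+k}$, while $\{L_r,L_s,L_u\}_\partial=0$ holds automatically because these images do not involve $z$. The scalar $f(M_t)$ should then enter through $\partial_z$, i.e.\ by sending $M_t$ to a function $\mathcal N_t$ with $\partial_z\mathcal N_t=f(M_t)$, so that $\{L_r,L_s,\mathcal N_t\}_\partial=\partial_z\mathcal N_t\cdot(r-s)L_{r+s+k}=f(M_t)(r-s)L_{r+s+k}$.

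The genuine difficulty, and where I expect the real work to lie, is making this realization faithful while preserving the remaining vanishing brackets $\{L,M,M\}_\partial=\{M,M,M\}_\partial=0$. The elements $M_t$ with $f(M_t)=0$ span an infinite-dimensional center of $(A,[\,,,]_{f,k})$, and they must be sent to linearly independent functions all of whose Jacobian brackets vanish; the freedom left in $\mathcal N_t=f(M_t)\,z+\theta_t(x,y)$ must be used to separate the $M_t$ without reactivating these brackets. Reconciling such a large center with a nondegenerate Nambu bracket on the $L$-part is the delicate step, and this is where a careful choice of the three coordinate functions—or, alternatively, an identification of (\ref{dmm}) with the determinant form (\ref{w3}) of Lemma \ref{do} for a suitably chosen involution and derivation—will be required.
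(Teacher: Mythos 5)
Your first two steps are correct and coincide with the paper's own argument. The observation that every value of $[\,,\,]_k$ lies in $\sum_m\mathbb FL_m$, so that $f([A,A]_k)=0$ and Lemma \ref{vf} applies, together with the expansion showing that the only surviving basis bracket is $[L_r,L_s,M_t]_{f,k}=f(M_t)(r-s)L_{r+s+k}$, is exactly what the paper does; so is the computation $A^{(1)}=\sum_m\mathbb FL_m$ and $A^{(s+1)}=[A^{(s)},A^{(s)},A]=A^{(1)}\neq0$ for non-solvability.

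The gap is in the canonical Nambu part, which you do not finish: you set up $L_r\mapsto y e^{(r+k)x}$ and $M_t\mapsto f(M_t)z+\theta_t(x,y)$ and then explicitly defer the choice of the $\theta_t$. For comparison, the paper simply takes $L_r\mapsto z e^{rx}$ and $M_t\mapsto f(M_t)\,y e^{kx}$ and checks that all brackets match; it never addresses the point you raise. Your instinct that faithfulness is the real issue is correct: that assignment sends all of $\mathcal M=\sum_t\mathbb FM_t$ into the single line $\mathbb F\,y e^{kx}$ and annihilates $\ker f\cap\mathcal M$, so it is a homomorphism onto a canonical Nambu algebra, not the isomorphism that the paper's definition of ``canonical Nambu'' requires. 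Moreover, the obstruction you sense cannot be removed by a cleverer choice of $\theta_t$: the centre of $(A,[\,,\,,]_{f,k})$ is $\ker f\cap\mathcal M$, which is infinite-dimensional, whereas in any subalgebra $B$ of analytic functions of $(x,y,z)$ under the bracket (\ref{nb}) containing three functionally independent elements $u_1,u_2,u_3$ (which any non-abelian $B$ must contain), a central element $\psi$ satisfies $0=[\psi,u_i,u_j]_\partial=\pm(\partial\psi/\partial u_k)\,[u_1,u_2,u_3]_\partial$ wherever the Jacobian is nonzero, hence is locally and therefore globally constant, so the centre of $B$ is at most one-dimensional. Thus no faithful analytic realization of $(A,[\,,\,,]_{f,k})$ exists; what both your sketch and the paper actually produce is a realization of the quotient $A/(\ker f\cap\mathcal M)$. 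To complete a correct proof you would have to either weaken the claim to the existence of a surjective homomorphism onto a canonical Nambu algebra, or state it for that quotient.
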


\begin{proof} From the definition of $f$ and the Lie structure (\ref{dms}) of $A$ it follows that $f([A, A])=0$. By Lemma \ref{vf} we know that $A$ is a $3$-Lie algebra under the multiplication (\ref{dmm}).

 Now we show that this $3$-Lie algebra is a canonical Nambu $3$-Lie algebra. It suffices to show that it can be realized as a canonical Nambu $3$-Lie algebra. Let $\beta_r=f(M_r)$, and choose the basis elements $L_r$ and $M_{r}$ of $A$ to be
\[
    L_r = z\exp (rx), \quad M_r= \beta_r y\exp (kx), \quad\text{for all } r \in \mathbb Z,
\]
where $\exp$ is the exponential function. Applying the Nambu bracket $[\, , , ]_\partial$ defined in (\ref{nb}) to these basis elements, we obtain
\[
    [L_r, M_s, M_t]_\partial=[M_r, M_s, M_t]_\partial=[L_r, L_s,L_t]_\partial=0,
\]
and
\[
    [L_r, L_s, M_t]_\partial=\begin{vmatrix}
                            rz \exp(rx) & 0 & \exp(rx) \\
                            sz \exp(sx) & 0 & \exp(sx)\\
                            \beta_t y\exp(kx) & \beta_t\exp(kx)&0 \\
    \end{vmatrix}=\beta_t(r-s)L_{r+s+k}.
\]
So, the Nambu bracket coincides with the bracket $[\, , , ]_{f,\,k}$ on $A$. In other words, the $3$-Lie algebra $A$ with the bracket $[\, , , ]_{f,\,k}$ can be realized as the following canonical Nambu $3$-Lie algebra
\[
    A=\sum\limits_{r\in \mathbb Z}(\mathbb F z\exp (rx)) \oplus \sum\limits_{r\in\mathbb Z}(\mathbb F y\exp (rx)).
\]

Next, a direct computation yields that $A^1=\sum\limits_{r\in \mathbb Z} \mathbb F L_r$, and $[A^1, A^1, A^1]_{f,\, k}=0$, but
$A^{(s+1)}=[A^{(s)}, A^{(s)}, A]_{f, k}=A^1\neq 0$ for all $s\geq 1$. It follows that $A$ is a non-solvable $3$-Lie algebra. \hfill$\Box$

\end{proof}

\begin{corollary}
  The subalgebra $L=\sum\limits_{s\in \mathbb Z} \mathbb F L_s $ is a hypo-nilpotent minimal ideal of $A$, and $M=\sum\limits_{s\in \mathbb Z} \mathbb F M_s$ is an abelian subalgebra.
\end{corollary}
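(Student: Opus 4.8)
The plan is to treat the two assertions separately, dispatching the statement about $M$ at once and concentrating the real work on the minimality of $L$.

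For $M=\sum_{s}\mathbb F M_s$, the defining multiplication (\ref{dmm}) lists no nonzero bracket of three $M$'s, so $[M,M,M]_{f,k}=0$. Hence $M$ is closed under the bracket and its derived algebra vanishes, which gives immediately that $M$ is an abelian subalgebra.

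For $L=\sum_{s}\mathbb F L_s$ I would first read off the structural properties directly from (\ref{dmm}). Since $[L_r,L_s,L_t]_{f,k}=0$, the subspace $L$ is an abelian subalgebra; in particular $L^1=[L,L,L]_{f,k}=0$, so $L$ is a nilpotent $3$-Lie algebra in its own right. That $L$ is an ideal follows because the only nonzero brackets have the shape $[L_r,L_s,M_t]_{f,k}=f(M_t)(r-s)L_{r+s+k}\in L$, whence $[L,A,A]_{f,k}\subseteq L$. On the other hand, fixing $t$ with $f(M_t)\neq0$ and letting $r,s$ vary, the elements $f(M_t)(r-s)L_{r+s+k}$ already span all of $L$ (to hit the index $n$, take $r+s=n-k$ with $r\neq s$); thus $L^1=[L,L,A]_{f,k}=L$, and by induction $L^{s}=L$ for every $s$, so $L$ is not a nilpotent ideal of $A$. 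Combining these two observations shows $L$ is hypo-nilpotent.

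The main obstacle is minimality: I must prove that every nonzero ideal $I\subseteq L$ coincides with $L$. The key device is the operator $\mathrm{ad}(L_p,M_c)$ on $L$, sending $L_s\mapsto f(M_c)(s-p)L_{s+p+k}$; choosing $p$ to be the \emph{top} index of a given element annihilates precisely that top term. Concretely, write a nonzero $u=\sum_{s\in S}c_sL_s\in I$ with $S$ finite and all $c_s\neq0$, and fix $c$ with $f(M_c)\neq0$. If $|S|\geq2$, applying $\mathrm{ad}(L_p,M_c)$ with $p=\max S$ yields a nonzero element of $I$ whose support has size $|S|-1$, since the $s=p$ term drops out while the surviving coefficients $c_s(s-p)$ remain nonzero and the shifted indices stay distinct. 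Iterating this descent reduces $u$ to a single basis vector $L_n\in I$. Finally, applying $\mathrm{ad}(L_b,M_c)$ to $L_n$ and letting $b$ range gives $L_m\in I$ for all $m\neq 2n+k$, and one further application (choosing an already-obtained $L_m$ with $m\neq n$ and shifting it to index $2n+k$) recovers the remaining basis vector; hence $I=L$. I expect this descent-on-support argument, together with the closing index bookkeeping, to be the only genuinely nontrivial ingredient, the rest being direct evaluation of (\ref{dmm}).
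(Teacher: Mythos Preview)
Your argument is correct. The paper itself offers no proof of this corollary, treating it as an immediate consequence of the computations in the preceding theorem: there one already sees that $A^{1}=\sum_{r}\mathbb F L_r=L$, that $[L,L,L]_{f,k}=0$, and that $[L,L,A]_{f,k}=L$, which together give the hypo-nilpotency; the abelian nature of $M$ is likewise read off from (\ref{dmm}). What the paper leaves entirely implicit is the minimality of $L$, and your descent-on-support argument via the operators $\mathrm{ad}(L_p,M_c)$ is a clean and complete way to supply it. In short, you have filled in the details the authors suppressed, and your index bookkeeping in the final step (recovering $L_{2n+k}$ from some $L_m$ with $m\neq n$) is sound.
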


\begin{corollary}
    The regular representation of the $3$-Lie algebra $A$ is a weight module, but not a Harish-Chandra module.
    \end{corollary}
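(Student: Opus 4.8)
The plan is to produce an explicit weight space decomposition of the regular representation with respect to a suitable Cartan subalgebra, and then to exhibit a single infinite-dimensional weight space. Recall that the regular representation is the pair $(A,\rho)$ with $\rho(x_1,x_2)(y)=[x_1,x_2,y]_{f,\,k}$. I would work with the subspace $H=\mathbb F L_{-k}\oplus M$, where $M=\sum_{t\in\mathbb Z}\mathbb F M_t$ as in the preceding corollary. The first task is to confirm that $H$ is a Cartan subalgebra. Nilpotency is immediate: every nonzero triple bracket requires two $L$-factors, and $H$ contains only the single such factor $L_{-k}$, so $[H,H,H]_{f,\,k}=0$ and $H$ is abelian. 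For the self-normalizing property I would take a general $y=\sum_r a_rL_r+\sum_t c_tM_t$ and impose $[y,H,A]_{f,\,k}\subseteq H$. Evaluating $[y,M_b,L_r]_{f,\,k}$ produces a combination of the $L_m$ that can lie in $H$ only if it is a multiple of $L_{-k}$; letting $r$ vary forces $a_r=0$ for every $r$, and then $[y,L_{-k},L_r]_{f,\,k}=-(k+r)f(y)L_r$ forces $f(y)=0$. Hence the normalizer of $H$ lies in $M\subseteq H$, which is exactly the condition required.

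With $H$ in hand, I would compute the action of $H\wedge H$ on the basis using the bracket formula of Lemma \ref{vf}. One finds $\rho(M_a,M_b)=0$ for all $a,b$, and
\[
\rho(L_{-k},M_b)(L_r)=f(M_b)(r+k)\,L_r,\qquad \rho(L_{-k},M_b)(M_t)=0 .
\]
Thus every basis vector $L_r$ and $M_t$ is a simultaneous eigenvector for all operators $\rho(h_1,h_2)$ with $h_1,h_2\in H$, so $A$ is the direct sum of its weight spaces: this is precisely the statement that the regular representation is a weight module. The weight of $L_r$ is the functional $\lambda_r$ on $H\wedge H$ determined by $\lambda_r(L_{-k},M_b)=f(M_b)(r+k)$; since $f$ does not vanish on $M$, these are pairwise distinct, whereas $L_{-k}$ and all of the $M_t$ carry the zero weight.

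To see that the module is not of Harish-Chandra type, I would isolate the zero weight space. The formulas above show that $A_0$ contains $L_{-k}$ together with every $M_t$, $t\in\mathbb Z$, so $A_0\supseteq M$ is infinite-dimensional. Since a Harish-Chandra module must have all weight spaces finite-dimensional, the regular representation is a weight module that is not a Harish-Chandra module.

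I expect the only genuinely delicate step to be the verification that $H$ is a Cartan subalgebra, rather than the eigenvalue computation. The naive guess $\mathbb F L_{-k}\oplus\mathbb F M_b$ is nilpotent and yields the same diagonal action, but it fails to be self-normalizing: its normalizer contains the whole infinite-dimensional kernel of $f|_M$. Enlarging the $M$-part to all of $M$ is what repairs the self-normalizing condition while leaving the weight space picture, and in particular the infinite-dimensional zero weight space, unchanged.
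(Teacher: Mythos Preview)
Your argument is correct and follows the same approach as the paper: you use the same Cartan subalgebra $H=\mathbb F L_{-k}\oplus M$, obtain the same weight functionals $\lambda_r(L_{-k},M_b)=f(M_b)(r+k)$, and identify the same infinite-dimensional zero-weight space $H$. In fact you supply more detail than the paper does, since the paper simply asserts that $H$ is the Cartan subalgebra, whereas you verify the self-normalizing property explicitly (your second test, yielding $f(y)=0$, is actually redundant once the first test has forced all $a_r=0$, but it does no harm).
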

\begin{proof}
Let $L=A$ be the $3$-Lie algebra $(A,\, [\,,\,,]_{f, k})$. Then the Cartan subalgebra of $L$ is
$$
    H = \mathbb F L_{-k}\oplus \sum\limits_{t\in \mathbb Z}\mathbb F M_t,
$$
which is an abelian subalgebra. The regular representation of $L$ is $(V, \mbox{ad})$, where $V=A$ as a vector space, and
$\mbox{ad}: L\wedge L\rightarrow \mbox{End}(V)$ is given by
\[
    {\rm \mbox{ad}}(x, y)(z)=[x, y, z]_{f, k}\quad \text{for all } x, y\in L  \text{ and } z\in V.
\]
Define $\lambda_r: H\wedge H\rightarrow \mathbb F$ by
\[
    \lambda_r(L_{-k}, M_t)=f(M_t)(r+k),\quad \lambda_r(M_s, M_t)=0, \quad \text{for all } r\in \mathbb Z.
\]
Then $V=\sum\limits_{\lambda_r} V_{\lambda_r} $ where $V_{\lambda_r}=\mathbb F L_r$ if $r\neq 0$, and $V_{0}=H$. But, $\dim H$ is not finite. \hfill$\Box$
\end{proof}

We now describe the inner derivation algebra ad$A$ of $(A, [ , , ]_{f,\, k})$. We begin by introducing three types of left multiplications of $(A, [ , , ]_{f,\,k})$,
\[
    W_{r,s}=\mbox{ad}(L_r, M_s), \quad X_{r,s}=\mbox{ad}(L_r, L_s), \quad Y_{r,s}=\mbox{ad}(M_r, M_s), \quad\text{for all }r, s\in \mathbb Z.
\]
Define two subspaces $W$ and $X$ of ad$A$ by
\[
    W =\sum\limits_{r\in \mathbb Z}\mathbb FW_{r,\, 0}\qquad\text{and}\qquad X = \mathbb FX_{1,\, -1} + \sum\limits_{r\in \mathbb Z,\, r\neq 0}\mathbb FX_{r,\, 0}.
\]
\begin{theorem}
The Lie subalgebra $W$ of {\rm ad}$A$ is isomorphic to a Witt algebra, and $X$ is an irreducible $W$-module. Furthermore,
\[
    {\rm ad}A = W \oplus X
\]
is not solvable, and $X$ is the only minimal ideal of ${\rm ad}A$.
\end{theorem}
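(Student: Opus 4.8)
The plan is to compute the three families of left multiplications explicitly and read off the entire structure from those formulas. Writing $\beta_s=f(M_s)$ (so that the Witt structure below forces $\beta_0=f(M_0)\ne0$), the anticommutativity of $[\,,,]_{f,k}$ together with the fact that the only nonzero brackets carry exactly two $L$'s and one $M$ gives
\[
W_{r,s}(L_t)=\beta_s(t-r)L_{r+t+k},\quad W_{r,s}(M_t)=0,\quad Y_{r,s}=0,
\]
\[
X_{r,s}(M_t)=\beta_t(r-s)L_{r+s+k},\quad X_{r,s}(L_t)=0.
\]
From these I would record three structural facts. First, $W_{r,s}=(\beta_s/\beta_0)W_{r,0}\in W$, so all $W$-type multiplications lie in $W$. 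Second, each $X_{r,s}$ is a scalar multiple of a single operator $T_n$ indexed by the target index $n=r+s+k$, where $T_n$ sends $M_t$ to a nonzero multiple of $\beta_t L_n$ and kills every $L_t$; since $X_{r,0}=T_{r+k}$ for $r\neq0$ and $X_{1,-1}=T_k$, the spanning set defining $X$ is exactly $\{T_n:n\in\mathbb Z\}$, and this is the span of all $X$-type multiplications. Third, $Y_{r,s}=0$. As $\mathrm{ad}A$ is spanned by the $X_{r,s},W_{r,s},Y_{r,s}$, this yields $\mathrm{ad}A=W+X$, and the sum is direct because $W$ annihilates every $M_t$ while $X$ annihilates every $L_t$, so $W\cap X=0$.

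Next I would establish the two internal structures using the inner-derivation bracket rule from Section~2. A short computation gives $[W_{r,0},W_{s,0}]=\beta_0(s-r)W_{r+s+k,0}$, and the rescaling/reindexing $p_n=-\beta_0^{-1}W_{n-k,0}$ converts this into $[p_i,p_j]=(i-j)p_{i+j}$, so $W$ is a Witt algebra. The same rule (or a direct operator computation) gives $[W_{r,0},T_n]=c_{r,n}T_{r+n+k}$ with $c_{r,n}\ne0$ precisely when $r\ne n$, so $[W,X]\subseteq X$ and $X$ is a $W$-module; moreover $[X_{r,s},X_{a,b}]=0$ since every triple bracket of $L$'s vanishes, so $X$ is abelian. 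Hence $X$ is an ideal of $\mathrm{ad}A=W\oplus X$ with quotient the perfect algebra $W$, and since a quotient of a solvable algebra would be solvable, $\mathrm{ad}A$ is not solvable.

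For irreducibility of $X$ the key is that the degree element $p_0=-\beta_0^{-1}W_{-k,0}$ acts diagonally: at $r=-k$ the target index equals $n$, the normalizing scalars cancel, and $[p_0,T_n]=-(n+k)T_n$. Thus the $T_n$ are weight vectors with pairwise distinct weights, so any nonzero $W$-submodule $U$, being $p_0$-invariant, is spanned by a subset of the $T_n$; and from a single $T_{n_0}\in U$ the brackets $[W_{r,0},T_{n_0}]$, nonzero for all $r\ne n_0$, reach every $T_m$, forcing $U=X$. Finally $X$ is minimal, for an ideal contained in $X$ is a $W$-submodule, hence $0$ or $X$. If $J$ were another minimal ideal, then $J\cap X$ is an ideal inside $X$, so $0$ or $X$; the case $J\cap X=X$ gives $J=X$, while $J\cap X=0$ gives $[J,X]=0$. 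Writing $j=w+x$ with $w\in W,\,x\in X$ and using $[X,X]=0$, this means $[w,X]=0$; but the $W$-action on $X$ is faithful (distinct $r$ send $T_n$ into distinct $T$-components, so $[w,X]=0$ forces $w=0$), whence $J\subseteq X$ and $J=0$, a contradiction. Therefore $X$ is the unique minimal ideal.

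The main obstacle is the pair of facts about the $W$-action on $X$: its irreducibility and its faithfulness. Both rest on the bookkeeping that the $\{T_n\}$ are linearly independent, that $W_{r,0}$ shifts the $T$-index by exactly $r+k$, and that the structure constant vanishes only at $r=n$. The clean weight identity $[p_0,T_n]=-(n+k)T_n$ (where the normalizing scalars cancel) is what makes the ``submodule is weight-spanned'' step and the faithfulness argument both go through; once that is in hand, uniqueness of the minimal ideal is a formal consequence of $X$ being an abelian ideal on which the simple algebra $W$ acts faithfully and irreducibly.
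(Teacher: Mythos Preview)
Your proof is correct and follows essentially the same approach as the paper: explicitly evaluate the three families of left multiplications on basis vectors, read off that $Y_{r,s}=0$, that $W$ is closed under brackets with Witt-type relations, that $X$ is abelian and $W$-stable, and conclude $\mathrm{ad}A=W\oplus X$. Your normalization $T_n$ (replacing the paper's basis $\{X_{r,0},X_{1,-1}\}$) and your weight-space argument via $p_0$ together with the faithfulness argument for uniqueness of the minimal ideal are cleaner and in fact more complete than the paper, which states the bracket formulas $[W_{s,s_0},X_{r,0}]$ and $[W_{s,s_0},X_{1,-1}]$ and then simply asserts irreducibility and uniqueness without further justification.
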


\begin{proof} Let $\beta_s=f(M_s)$. Using (\ref{dmm}), it is routine to check that $Y_{r,\,s}= 0$ and, for all $r,\, s,\,  t\in\mathbb Z$,
\begin{align*}\label{wx}
    W_{r,\, s}(L_t) &= \beta_s (t-r)L_{r+t+k}, \\
    W_{r,\, s}(M_t) &= X_{r,\, s}(L_t)=0,\\
    X_{r,\, s}(M_t) &= \beta_t (r-s)L_{r+s+k},\\
    Y_{r,\,s}(M_t) &=Y_{r,\,s}(L_t)=0.
\end{align*}
Since $f\neq 0,$ there exists $s_0\in \mathbb Z$ such that $\beta_{s_0}\neq 0$. A direct computation shows that if $s\neq -r,$ then
\[
    X_{r,\, s} = \frac{r-s}{r+s}X_{r+s,\,  0}.
\]
If $ s=-r$, then $X_{r,\, s}=\frac{1}{r}X_{1,\,  -1}$.
Hence $W_{r,\, s}=\frac{\beta_s}{\beta_0}W_{r,\, s_0}$ and  $X_{r,\, s} = -X_{s,\, r}$, for $r,\, s\in \mathbb Z.$ It follows that
\begin{equation*}\label{b}
    \{W_{s,\,  s_0}, ~X_{r,\, 0}, ~X_{1,\,  -1} \mid ~ r,\, s\in \mathbb Z, ~r\neq 0\}
\end{equation*}
is a basis of the inner derivation algebra ad$(A)$.
With (\ref{dmm}), we obtain
\[
    [W_{r,\, s_0}, ~W_{s,\,  s_0}]=\beta_{s_0}(s-r)W_{s+r+k,\,  s_0}.
\]
Thus $W=\sum\limits_{s\in \mathbb Z}\mathbb F W_{s,\,  s_0}$ is isomorphic to a Witt algebra. Thus ad$A$ is not solvable.

Without showing the details, a moderate computation gives rise to $[X_{r,\, 0},~~ X_{s,\,  0}]=[X_{r,\,  0},\, X_{1,\,  -1}]=0$, and
\begin{equation*}\label{wsx1}
\begin{array}{l}
\begin{array}{l}
{[}W_{s,\,  s_0}, ~X_{1,\, -1}]=\begin{array}{l}
\left\{\begin{array}{l}
2\beta_{s_0}\frac{t-s}{s+k}X_{s+k,\, 0},\, ~~~~~~~~~\quad s\neq -k, \\\\
2\beta_{s_0}\frac{1}{s}X_{-1,\,  1},\,~~~~~~~~~~~~~~~~ s=-k,\\
\end{array}\right.  \end{array}\\\\
\end{array} \end{array}
\end{equation*}
as well as
\begin{equation*}\label{wsxr}
\begin{array}{l}
\begin{array}{l}
{[}W_{s,\,  s_0}, ~X_{r,\, 0}]=\begin{array}{l}
\left\{\begin{array}{l}
r\beta_{s_0}\frac{r-s+k}{r+s+k}X_{r+s+k,\, 0}\,, \,~\quad r+s\neq -k, \\\\
r\beta_{s_0}\frac{1}{s}X_{-1,\,  1}\,, ~~~~~~~~~~~~\quad  r+s=-k, ~s\neq 0,\\\\
r\beta_{s_0}X_{r+k,\,  0}\,, ~~~~~~~~~~\quad\quad  r=-k, ~ s=0.
\end{array}\right.  \end{array}\\\\
\end{array}
\end{array}
\end{equation*}
It follows that $X=FX_{1,\,  -1}+\sum\limits_{r\in \mathbb Z,\,  r\neq 0} FX_{r,\,  0}$ is an irreducible $W$-module. Moreover, ad$A = W \oplus X$, and $X$ is its only minimal ideal. \hfill $\Box$

\end{proof}

\section{The $3$-Lie algebra $A_{\omega}^\delta$}\label{awd}
Again, let $A$ be the commutative associative algebra defined by $(\ref{a})$. Define the derivation $\delta$ of $A$ by
\begin{equation}\label{delta}
    \delta(L_r)=r L_r, \quad \delta(M_r)= r M_r, \quad\text{for all } r\in \mathbb Z.
\end{equation}
Let $\omega : A \rightarrow A$ be a linear mapping given by
\begin{equation}\omega(L_r)=M_{-r}, \quad \omega(M_r)=L_{-r}, \quad\text{for all } r\in \mathbb Z. \end{equation}  Then  $\omega$ is an involution of $A$, since $\omega^2=\omega$ and
$$\begin{array}{l}
\begin{array}{l}
\omega(L_rL_s)=\omega(L_{r+s})=M_{-r-s}=M_{-r}M_{-s}=\omega(L_r)\omega(L_s), \text{ and }\\
\omega(M_rM_s)=\omega(M_{r+s})=L_{-r-s}=L_{-r}L_{-s}=\omega(M_r)\omega(M_s).\\
\end{array}  \end{array}
$$
Furthermore,
\begin{equation}\label{omega}
    (\delta \omega+\omega \delta)(L_r)=(\delta \omega+\omega \delta)(M_r)=0, \quad\text{for all } r\in \mathbb Z.
\end{equation}

We have the following result.
\begin{theorem}
$A$ is a simple canonical Nambu $3$-Lie algebra under the multiplication $[\, , , ]_{\omega}$ defined by, for all $r, s, t\in \mathbb Z$,
\begin{align}
           [L_r, L_s, M_t]_{\omega}&=(s-r)L_{r+s-t},\label{llm}\\
           [L_r, M_s, M_t]_{\omega}&=(t-s)M_{s+t-r}.\label{lmm}
\end{align}
\end{theorem}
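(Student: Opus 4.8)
The plan is to establish three things: that the bracket $[\,,\,,]_\omega$ arises from the construction of Lemma \ref{do}, that the resulting $3$-Lie algebra is simple, and that it can be realized by the canonical Nambu bracket \eqref{nb}. The first point is essentially free: the paper has already verified that $\omega$ is an involution, that $\delta$ defined by \eqref{delta} is a derivation, and that $\delta\omega+\omega\delta=0$ in \eqref{omega}. So Lemma \ref{do} immediately endows $A$ with a $3$-Lie structure via the $3\times3$ determinant \eqref{w3}. The first concrete step is therefore to expand that determinant on basis triples and confirm it reproduces \eqref{llm} and \eqref{lmm}; for instance, $[L_r,L_s,M_t]_\omega$ is the determinant with rows $(\omega,\mathrm{id},\delta)$ applied to $(L_r,L_s,M_t)$, and using $\omega(L_r)=M_{-r}$, $\omega(M_t)=L_{-t}$, $\delta(L_r)=rL_r$, one checks the only surviving products (recalling $L_iM_j=0$) give $(s-r)L_{r+s-t}$. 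One also records that all other basis triples, such as $[L_r,L_s,L_t]_\omega$ and $[M_r,M_s,M_t]_\omega$, vanish.

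For the canonical Nambu realization I would mimic the substitution used in Theorem \ref{dtf}: assign to the basis vectors explicit functions of three variables whose Jacobian determinant reproduces \eqref{llm}--\eqref{lmm}. A natural choice is $L_r=z\exp(rx)$ and $M_r=y\exp(rx)$ so that $\delta$ becomes $\partial/\partial x$ up to the appropriate factor and $\omega$ swaps the roles of $y$ and $z$ while sending $x\mapsto -x$; I would then compute the Jacobian $\partial(f_1,f_2,f_3)/\partial(x,y,z)$ on the relevant triples and check that the exponents add as $r+s-t$ (resp.\ $s+t-r$) with the correct antisymmetric coefficient $s-r$ (resp.\ $t-s$). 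The bookkeeping of which exponential factors survive differentiation is routine once the assignment is fixed, so the realization reduces to verifying a single $3\times3$ determinant identity in each of the two nonzero cases.

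The main obstacle is the simplicity proof. To show $A$ is simple I would take a nonzero ideal $I$ and argue it must be all of $A$. The strategy is to pick a nonzero element $u\in I$, write it in the basis $\{L_r,M_r\}$, and apply carefully chosen brackets $[\,\cdot\,,L_a,M_b]_\omega$ and $[\,\cdot\,,L_a,L_b]_\omega$ (using \eqref{llm}--\eqref{lmm}) to isolate individual basis vectors inside $I$. The coefficients $s-r$ and $t-s$ are the key leverage: by choosing the auxiliary indices so that these factors are nonzero, one can shift a captured $L$ (or $M$) to any index, and crucially one can toggle between the $L$-sector and the $M$-sector, since $[L_r,L_s,M_t]_\omega$ lands in the $L$'s while $[L_r,M_s,M_t]_\omega$ lands in the $M$'s. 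The delicate part is the extraction of a single homogeneous component from a general $u$ and handling the degenerate index choices where a coefficient vanishes; once one shows $L_0$ (or some $L_r$) lies in $I$, propagating to all $L_r$ and then crossing over to all $M_r$ completes the argument.

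Finally I would note that $A$ is not abelian, since \eqref{llm}--\eqref{lmm} produce nonzero brackets, so together with the ideal argument this yields simplicity. A convenient warm-up, which I would carry out first, is to verify the generalized Jacobi identity \eqref{jacobi} directly on basis elements as an independent check that \eqref{llm}--\eqref{lmm} are consistent, though strictly this is guaranteed by Lemma \ref{do}.
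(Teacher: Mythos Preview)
Your overall architecture matches the paper's proof, but there are two concrete issues.

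\textbf{The Nambu realization has the wrong sign.} With your choice $M_r=y\exp(rx)$, the Jacobian gives
\[
[L_r,L_s,M_t]_\partial=(s-r)\,z\exp\bigl((r+s+t)x\bigr)=(s-r)L_{r+s+t},
\]
not $(s-r)L_{r+s-t}$. You actually noticed the discrepancy when you said $\omega$ should send $x\mapsto -x$, but you did not implement it in the formula. The paper takes $M_r=y\exp(-rx)$, and then the determinant produces the correct index $r+s-t$ (and likewise $s+t-r$ in the second case). This is an easy fix, but as written your proposed realization does not reproduce \eqref{llm}--\eqref{lmm}.

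\textbf{The simplicity argument is missing the extraction mechanism.} You correctly identify the ``delicate part'' as isolating a single basis vector from a general $u=\sum a_iL_i+\sum b_jM_j$ inside the ideal, but you do not say how to do it. The paper's device is to observe that $\mathrm{ad}(L_r,M_r)$ acts diagonally, sending $L_i\mapsto(i-r)L_i$ and $M_j\mapsto(j-r)M_j$; applying $\mathrm{ad}^l(L_r,M_r)$ for $l=0,1,2,\ldots$ then yields a Vandermonde system from which one extracts some $L_p$ or $M_m$ in the ideal. After that, your propagation step (shifting indices via \eqref{llm} and crossing between the $L$- and $M$-sectors via \eqref{lmm}) is exactly what the paper does. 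So the outline is right, but the key lemma---the diagonal/Vandermonde extraction---is precisely the step you left unspecified.
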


\begin{proof}
Since the derivation $\delta$ and involution $\omega$ satisfy (\ref{delta}) and (\ref{omega}), by Lemma 2.2 we know that $A$ is a $3$-Lie algebra with respect to the multiplication (\ref{w3}), which leads to  (\ref{llm}) and (\ref{lmm}), and the remaining brackets are all zero.  Let $A_{\omega}^{\delta}$ denote this $3$-Lie algebra $(A, [ , , ]_{\omega})$.

Now we show that $A_{\omega}^{\delta}$ is a canonical Nambu $3$-Lie algebra. It suffices to show that it can be realized as a canonical Nambu $3$-Lie algebra. Choose the basis elements $L_r$ and $M_{r}$ of $A$ to be
$$
    L_r=z \exp(rx), ~~ M_r=y \exp(-rx), ~ \mbox{for all } ~ r\in \mathbb Z,
$$
Then the Nambu bracket on $A$ becomes  \begin{align*}
[L_r, L_s, M_t]_\partial&=\frac{\partial(L_r, L_s, M_t)}{\partial(x, y, z)}
=\begin{vmatrix}
rz \exp(rx) & 0 & \exp(rx) \\
sz \exp(sx) & 0 & \exp(sx)\\
-ty\exp(-tx) & \exp(-tx)&0 \\
\end{vmatrix}=(s-r)L_{r+s-t},\\
[L_r, M_s, M_t]_\partial&=\frac{\partial(L_r, M_s, M_t)}{\partial(x, y, z)}=\begin{vmatrix}
rz \exp(rx) & 0 & \exp(rx) \\
-sy\exp(-sx) & \exp(-sx) & 0\\
-ty\exp(-tx)&\exp(-tx)&0\\
\end{vmatrix}=(t-s)M_{s+t-r},\\
[M_r, M_s, M_t]_\partial&=\frac{\partial(M_r, M_s, M_t)}{\partial(x, y, z)}=\begin{vmatrix}
-ry \exp(rx) & \exp(rx)& 0 \\
-sy\exp(-sx) & \exp(-sx) & 0\\
-ty\exp(-tx)&\exp(-tx)&0
\end{vmatrix}=0,\\
[L_r, L_s,L_t]_\partial&=\frac{\partial(L_r, L_s, L_t)}{\partial(x, y, z)}=\begin{vmatrix}
rz \exp(rx) & 0 & \exp(rx) \\
sz \exp(sx) & 0 & \exp(sx)\\
tz \exp(tx) & 0 & \exp(tx)
\end{vmatrix}=0.
\end{align*}
Thus the bracket $[\, , , ]_{\omega}$ and the Nambu bracket on $A$ are consistent, showing that the $3$-Lie algebra $A$ with the bracket $[\, , , ]_{\omega}$ is a canonical Nambu $3$-Lie algebra.

We next show that $A_{\omega}^{\delta}$ is simple. For every $r, s\in \mathbb Z$ and $r\neq s$, from (\ref{llm}) and (\ref{lmm}) we obtain that both
\[
    [L_r, L_s, M_s]_{\omega}=(s-r)L_{r} \quad\text{and}\quad [L_s, M_r, M_s]_{\omega}=(s-r)M_{r}
\]
are in the derived algebra of $A^{\delta}_{\omega}$. So, $A^{\delta}_{\omega}$ is equal to its derived algebra. Now, let $J$ be a non-zero ideal of $A^{\delta}_{\omega}$. If $0\ne u\in J,$ then there exist $a_i, b_j\in\mathbb F$ such that
\[
    u=\sum\limits_{i=p}^qa_iL_i+\sum\limits_{j=m}^nb_jM_j, \quad
p < q, ~m < n, ~a_p\neq 0 ~or ~b_m\neq 0.
\]
Let $r\in\mathbb Z$ such that $r > q$ and $r > n.$ Then from (\ref{llm}) and (\ref{lmm}) we have
$$
\mbox{ad}^l(L_r, M_r)(u) = \sum\limits_{i=p}^qa_i(i-r)^lL_i + \sum\limits_{j=m}^n b_j(j-r)^lM_j \in J, \quad \text{for any positive integer } l.
$$
By Vandermonde determinant, we obtain $L_p\in J$ or $M_m\in J$. Therefore, for every $s\in Z, ~s\neq p,$
$$
    L_s = \frac{1}{p-s}[L_s, L_p, M_p]_{\omega}\in J \quad\text{and}\quad M_s = \frac{1}{p-s}[L_p, M_s, M_p]_{\omega}\in J.
$$
From (\ref{lmm}), $M_p=\frac{1}{s-p}[L_s, M_p, M_s]_{\omega}\in J.$ Hence $J=A$, showing that $A$ is simple. \hfill$\Box$
\end{proof}

The $3$-Lie algebra structure $[\, ,  ,]_\omega$ of $A$ induces different Lie algebra structures on $A$. For a fixed $k\in\mathbb Z$, define the Lie bracket $[\, ,]_{L_k}$ on $A$ by
\[
    [u, v]_{L_k}=[u, v, L_k]_\omega,  \quad\text{for all } u, v\in A.
\]
\begin{corollary}\label{awl}
  Let $M=\sum\limits_{s\in\mathbb Z}\mathbb F M_s$ and $L=\sum\limits_{s\in \mathbb Z}\mathbb F L_s$. Then $M$ is a Lie subalgebra of $(A, ~[\, ,]_{L_k})$  isomorphic to a Witt algebra, and $L$ is a reducible $M$-module. Furthermore, $\mathbb F L_k$ is the center of $(A, ~[\, , ]_{L_k})$.
\end{corollary}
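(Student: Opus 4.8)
The plan is to make the three induced brackets explicit and then read off each of the three assertions. Recall first why $(A, [\,,]_{L_k})$ is genuinely a Lie algebra: for a fixed $w$ in any $3$-Lie algebra, the binary operation $[u,v]:=[u,v,w]$ is antisymmetric by (\ref{ac}), and its Jacobi identity is exactly the generalized Jacobi identity (\ref{jacobi}) with the last two slots filled by $w$ (the term involving $[w,\cdot,w]$ vanishing). So the only real task is to determine the multiplication table of $[\,,]_{L_k}$ on the basis $\{L_r, M_r\}$ and extract the structure from it.

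First I would compute the three bracket types using (\ref{llm}), (\ref{lmm}) together with the antisymmetry (\ref{ac}). Since every $LLL$- and $MMM$-bracket of $A_\omega^\delta$ is zero, we get $[L_r, L_s]_{L_k}=[L_r,L_s,L_k]_\omega=0$, so $L$ is abelian. Cyclically permuting $(M_r,M_s,L_k)$ into the canonical order for (\ref{lmm}) yields $[M_r,M_s]_{L_k}=(s-r)M_{r+s-k}$, and cyclically permuting $(M_r,L_s,L_k)$ into the canonical order for (\ref{llm}) yields $[M_r,L_s]_{L_k}=(k-s)L_{s+k-r}$. These three formulas are the whole computation; everything afterward is bookkeeping.

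With the table in hand, the reindexing $Q_a:=M_{k-a}$ turns the second formula into $[Q_a,Q_b]_{L_k}=(a-b)Q_{a+b}$, which is precisely the Witt relation, so $M\cong W$. Because $L$ is abelian and $[M,L]_{L_k}\subseteq L$, the space $L$ is an $M$-module under the adjoint action, with $Q_a\cdot L_s=(k-s)L_{s+a}$ in the new indexing. Setting $s=k$ gives $Q_a\cdot L_k=0$ for all $a$, so $\mathbb F L_k$ is a one-dimensional (trivial) submodule; since $\dim L=\infty$ it is proper and nonzero, and hence $L$ is reducible.

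Finally, for the center I would take a general $z=\sum a_i L_i+\sum b_j M_j$ and impose $[z,\cdot]_{L_k}=0$. Bracketing against $L_s$ kills the $L$-part and leaves $\sum_j b_j(k-s)L_{s+k-j}$; choosing any $s\neq k$ and using linear independence of the $L_{s+k-j}$ forces all $b_j=0$, so a central element lies in $L$. Bracketing $\sum_i a_i L_i$ against $M_s$ leaves $-\sum_i a_i(k-i)L_{i+k-s}$, and linear independence forces $a_i(k-i)=0$, i.e.\ $a_i=0$ for $i\neq k$; conversely $L_k$ is central by the formulas above. Thus the center is exactly $\mathbb F L_k$. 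The only place demanding care is tracking the permutation signs in (\ref{ac}) when reducing $[M_r,M_s,L_k]$ and $[M_r,L_s,L_k]$ to the canonical brackets (\ref{lmm}), (\ref{llm}) — each is an even (cyclic) permutation, but this must be verified to pin down the structure constants and hence the correct Witt normalization.
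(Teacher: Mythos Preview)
Your proposal is correct and follows essentially the same approach as the paper: compute the three induced brackets $[L_r,L_s]_{L_k}=0$, $[M_r,M_s]_{L_k}=(s-r)M_{r+s-k}$, and $[L_r,M_s]_{L_k}=(r-k)L_{r+k-s}$, then read off the structure. The paper's own proof simply records these formulas and declares the result, whereas you supply the details the paper omits --- the explicit Witt reindexing $Q_a=M_{k-a}$, the submodule $\mathbb F L_k\subset L$ witnessing reducibility, and the full center computation --- so your argument is a strictly fleshed-out version of the same route.
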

\begin{proof}
  The results follow from  $[L_r, L_s]_{L_k} =0,$ as well as
  \[
    [M_r, M_s]_{L_k} = (s-r)M_{r+s-k} \quad\text{and}\quad
    [L_r, M_s]_{L_k} =(r-k)L_{r+k-s}
  \]
   for all $r, s\in \mathbb Z.$ \hfill$\Box$
\end{proof}

Similarly, for each $k\in\mathbb Z$, we can define a new Lie bracket on $A$ by
\[
    [u, v]_{M_k}=[u, v, M_k]_\omega,  \quad\text{for all } u, v\in A,
\]
a similar result to Corollary \ref{awl} holds.

\section{The inner derivation algebra of $A^{\delta}_{\omega}$}\label{Awd}
To describe the structure of ${\rm ad} A^{\delta}_\omega$ and its connection to the representations of Witt algebras, we need a basis of ad$A_\omega$. For $0\ne r\in\mathbb Z$, let
\begin{align}
p_r &= \frac{1}{2}(\mbox{ad}(L_0, M_{-r})+\mbox{ad}(L_r, M_0)) & q_r &=\frac{1}{r}(\mbox{ad}(L_0, M_{-r})-\mbox{ad}(L_r, M_0)),\label{nzeros}\\
x_r &= \frac{1}{r}\mbox{ad}(L_r, L_0)                          & z_r &=\frac{1}{-r}\mbox{ad}(M_{-r}, M_0), \label{nzeros2}
\end{align}
and let
\begin{align}
    p_0 &= \mbox{ad}(L_0, M_{0})                & q_0 &= \mbox{ad}(L_0, M_{0})-\mbox{ad}(L_1, M_1), \label{zeros}\\
    x_0 &=\frac{1}{2}\mbox{ad}(L_{1},L_{-1})    & z_0 &= \frac{1}{2}\mbox{ad}(M_{1},M_{-1}). \label{zeros2}
\end{align}

\begin{theorem}\label{newbasis}
The set
\begin{equation*}
    \{p_r,\, q_r,\, x_r,\, z_r\mid r\in\mathbb Z\}
\end{equation*}
is a basis of ${\rm ad}A^{\delta}_\omega$, and the multiplication in this basis is as follows: for all $r, s\in \mathbb Z,$
\begin{subequations}
\begin{align}
    [p_r, p_s] &= (r-s)p_{r+s}, &   [p_r, q_s]    &=-sq_{r+s}, \label{sb1}  \\
    [p_r, x_s] &=-sx_{r+s},     &   [p_r, z_s]  &=-sz_{r+s}, \label{sb2} \\
    [q_r, q_s] &=0,             &   [q_r,x_s]     &=-2x_{r+s}, \label{sb3} \\
    [q_r, z_s] &=2z_{r+s},      &   [x_r, x_s]    &=0, \label{sb4} \\
    [z_r, z_s] &=0,             &   [z_r, x_s]    &=q_{r+s}.\label{sb5}
\end{align}
\end{subequations}
\end{theorem}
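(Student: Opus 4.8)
The plan is to replace the case-split definitions (\ref{nzeros})--(\ref{zeros2}) by a single uniform description of how each generator acts on the vector-space basis $\{L_c,M_c\mid c\in\mathbb Z\}$, and then to read off the basis claim and the bracket relations directly from these formulas. First I would record the action of the three types of left multiplication, using only (\ref{llm})--(\ref{lmm}) and total antisymmetry of $[\,,\,,]_\omega$:
\[
\mathrm{ad}(L_a,L_b)(M_c)=(b-a)L_{a+b-c},\qquad \mathrm{ad}(M_a,M_b)(L_c)=(b-a)M_{a+b-c},
\]
with $\mathrm{ad}(L_a,L_b)(L_c)=\mathrm{ad}(M_a,M_b)(M_c)=0$, and
\[
\mathrm{ad}(L_a,M_b)(L_c)=(a-c)L_{a-b+c},\qquad \mathrm{ad}(L_a,M_b)(M_c)=(c-b)M_{b-a+c}.
\]
The structural point is the block behaviour: $\mathrm{ad}(L_a,L_b)$ sends $M\to L$ and kills $L$, $\mathrm{ad}(M_a,M_b)$ sends $L\to M$ and kills $M$, while $\mathrm{ad}(L_a,M_b)$ preserves each of $L$ and $M$. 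Substituting into (\ref{nzeros})--(\ref{zeros2}) and separately checking that the four $r=0$ definitions in (\ref{zeros})--(\ref{zeros2}) agree with the generic ones, I obtain the uniform formulas, valid for \emph{every} $r\in\mathbb Z$:
\[
p_r(L_c)=(\tfrac r2-c)L_{c+r},\quad p_r(M_c)=(c+\tfrac r2)M_{c-r},\quad q_r(L_c)=-L_{c+r},\quad q_r(M_c)=M_{c-r},
\]
\[
x_r(M_c)=-L_{r-c},\quad x_r(L_c)=0,\quad z_r(L_c)=-M_{-r-c},\quad z_r(M_c)=0.
\]

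For the spanning part I would note that the displayed identities give, for all $a,b\in\mathbb Z$,
\[
\mathrm{ad}(L_a,L_b)=(a-b)x_{a+b},\quad \mathrm{ad}(M_a,M_b)=(a-b)z_{-(a+b)},\quad \mathrm{ad}(L_a,M_b)=p_{a-b}-\tfrac{a+b}{2}\,q_{a-b},
\]
the last because on $L_c$ the combination $p_{a-b}-\tfrac{a+b}2 q_{a-b}$ has coefficient $(\tfrac{a-b}2-c)+\tfrac{a+b}2=a-c$, matching $\mathrm{ad}(L_a,M_b)(L_c)$, and identically on $M_c$. Since $\mathrm{ad}A^\delta_\omega$ is by definition the span of all such left multiplications, the set $\{p_r,q_r,x_r,z_r\}$ spans it. For linear independence I would feed a finite null combination $\sum_r(\alpha_r p_r+\beta_r q_r+\gamma_r x_r+\delta_r z_r)=0$ into the uniform formulas. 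Applying it to $L_c$ produces an $L$-part coming from the $p_r,q_r$ and an $M$-part coming from the $z_r$; the $M$-part forces every $\delta_r=0$, and the $L$-part reads $\alpha_r(\tfrac r2-c)-\beta_r=0$ for all $c$, which, being affine in $c$, forces $\alpha_r=\beta_r=0$. Applying it to $M_c$ isolates an $L$-part coming from the $x_r$, forcing every $\gamma_r=0$. Hence the set is a basis.

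With the explicit operators in hand, the ten relations (\ref{sb1})--(\ref{sb5}) are simply commutators in $\mathrm{End}(A)$ evaluated on $L_c$ and $M_c$ (equivalently one may invoke the inner-derivation identity of Section 2); each collapses to a short identity between coefficient polynomials in $c$. For example $[p_r,p_s]=(r-s)p_{r+s}$ follows on $L_c$ from $(\tfrac s2-c)(\tfrac r2-c-s)-(\tfrac r2-c)(\tfrac s2-c-r)=(r-s)(\tfrac{r+s}2-c)$, and analogously on $M_c$. I do not expect any single step to be a genuine obstacle; the effort is bookkeeping, and the one place demanding care is verifying that the exceptional $r=0$ definitions in (\ref{zeros})--(\ref{zeros2}) really do collapse into the generic formulas, since the entire argument—spanning, independence, and the multiplication table—rests on that unification.
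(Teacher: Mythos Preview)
Your proposal is correct and takes a genuinely different route from the paper. The paper works through an intermediate basis $\{W_{0,0},W_{1,1},X_{1,-1},Y_{1,-1}\}\cup\{W_{r,0},W_{0,r},X_{r,0},Y_{r,0}\mid r\neq 0\}$, first establishing its linear independence by ``a tedious calculation,'' then reducing every $W_{r,s},X_{r,s},Y_{r,s}$ to this set via several case distinctions ($s=r$ vs.\ $s\neq r$, $s=-r$ vs.\ $s\neq -r$), and finally computing a long list of brackets among the $W,X,Y$ elements (again with case splits) before translating to the $p,q,x,z$ basis. Your approach bypasses all of this by observing that the special $r=0$ definitions in (\ref{zeros})--(\ref{zeros2}) collapse into the generic ones, yielding uniform closed formulas for $p_r,q_r,x_r,z_r$ as operators on $A$; spanning, independence, and the full multiplication table then fall out of short polynomial identities in the parameter $c$, with no case distinctions at all. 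What your method buys is economy and transparency: the block structure ($x_r:M\to L$, $z_r:L\to M$, $p_r,q_r$ diagonal) is visible at once, and each bracket identity reduces to a one-line coefficient check. What the paper's method buys is that it never leaves the span of explicit left multiplications $\mathrm{ad}(-,-)$, so the reader sees directly how the abstract basis relates to the defining generators; your spanning identities $\mathrm{ad}(L_a,L_b)=(a-b)x_{a+b}$ etc.\ recover this information, but in compressed form.
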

\begin{proof} For convenience, write
\begin{equation}\label{xyz}
    W_{r, s}=\mbox{ad} (L_r, M_s), \quad X_{r, \,s}=\mbox{ad} (L_r, L_s), \quad  Y_{r, \,s}=\mbox{ad} (M_r, M_s)
\end{equation}
where  $r, s\in \mathbb Z.$
Then, for $t\in\mathbb Z$,
\begin{subequations}
\begin{align}
  W_{r, \,s}(L_t) &= (r-t) L_{t+r-s} ~\quad\quad\quad\text{and}         & W_{r, \,s}(M_t) &= (t-s)M_{t+s-r} \label{WLM}\\
  X_{r, \,s}(L_t) &= 0  ~~\qquad\qquad\quad\quad\quad\quad\text{and}    & X_{r, \,s}(M_t) &= (s-r) L_{r+s-t} \label{XLM}\\
  Y_{r, \,s}(L_t) &= (s-r)M_{r+s-t}  \quad\quad\quad\text{and}          & Y_{r, \,s}(M_t) &= 0. \label{YLM}
\end{align}
\end{subequations}
A tedious calculation yields that the set
\begin{equation}\label{fb}
    \{W_{0,\, 0},\, W_{1,\, 1},\, X_{1,\, -1},\,  Y_{1,\, -1} \} \bigcup \{W_{r,\, 0},\, W_{0,\, r},\, X_{r,\, 0},\, Y_{r,\, 0}~ \mid r\in \mathbb Z, \,r\neq 0\}
\end{equation}
is linearly independent in ad$A^{\delta}_{\omega}$.

Next, we show that (\ref{fb}) is a basis of ad$A^{\delta}_{\omega}$.
By (\ref{WLM}), if $s=r$, we have  $W_{r,\,s} = (1-r)W_{0,\, 0} + rW_{1,\,1}$; if $s\neq r$, then
\begin{equation*}\label{yrs}
    W_{r,\, \,s} = \frac{1}{r-s}(rW_{r-s,\, 0} - sW_{0,\, s-r}).
\end{equation*}
From (\ref{XLM}) and (\ref{YLM}),
if $s=-r$, then $X_{r,\, s}=rX_{1,\, -1}$ and $Y_{r,\, s}=rY_{1,\, -1}$; if $ s\neq -r$, then we obtain
\begin{equation*}\label{xrs}
  X_{r,\, s} = \frac{r-s}{r+s}X_{r+s,\, 0}\quad\text{and}\quad  Y_{r,\, s} = \frac{r-s}{r+s}Y_{r+s,\, 0}.
\end{equation*}
Also, $X_{r,\, \,s} = -X_{s,\, r}$ and $Y_{r,\, s} = -Y_{s,\, r},$ for all $r,\, s\in \mathbb Z.$
Thus, the elements in (\ref{fb}) form a basis of ad$A^{\delta}_{\omega}$.

From the definitions of $p_r,\, q_r,\, x_r,\, z_r$ given in (\ref{nzeros})$-$(\ref{zeros2}), we know that
\begin{align*}
    p_0 &=W_{0,\, 0}                           & q_0 &= W_{0,\, 0}-W_{1,\, 1}\\
    x_0 &=\frac{1}{2}X_{1,\,  -1}              & z_0 &=\frac{1}{2}Y_{1,\, -1}
\end{align*}
and, for $0\ne r\in \mathbb Z$,\,
\begin{align*}
    p_r &= \frac{1}{2}(W_{r,\, 0}+W_{0,\,  -r})       & q_r &= \frac{1}{r}(W_{0,\,  -r}-W_{r,\,  0}) \\
    x_r &= \frac{1}{r}X_{r,\,  0}                  & z_r &= \frac{1}{-r}Y_{-r,\,  0}.
\end{align*}
Therefore, $\{p_r,\, q_r,\, x_r,\, z_r\}$  is a basis of ad$A^{\delta}_\omega$.

To obtain the multiplication in (17a)$-$(17e), we have to calculate the Lie brackets of any two elements in (\ref{xyz}). Note that if a bracket is zero, we omit it.

From (\ref{WLM}) it follows that, for all $r, s\in\mathbb Z$,
\begin{align*}
  [W_{r,\, 0}, ~~ W_{s,\, 0}] &= (r-s)W_{r+s,\, 0}  & [W_{0,\, r}, ~~W_{0,\, s}]  &= (s-r)W_{0,\, r+s} \\
  [W_{1,\,1},  ~~ W_{r,\, 0}] &= -rW_{r,\, 0}       & [W_{1,\,1}, ~~  W_{0,\, s}] &= sW_{0,\, s}       \\
  [W_{0,\,0},  ~~  W_{r,\, 0}] &= -rW_{r,\,0}       & [W_{0,\,0}, ~~  W_{0,\, s}] &= ~ sW_{0,\, s}
\end{align*}
and
\[
    [W_{r,\,0}, ~~ ~~ W_{0,\, s}] ~=\left\{\begin{array}{l}
 \frac{1}{r-s}(r^2W_{r-s,\, 0}-s^2W_{0,\,s-r}), ~~  r\neq s,\, \\\\
 2rW_{0,\, 0}+r^2(W_{1,\, 1}-W_{0,\, 0}),~~ r=s.\\
\end{array}\right. \\
\]

Using (\ref{WLM}) and (\ref{XLM}), we obtain
\begin{align*}
    [X_{r,\, 0}, ~~ W_{0,\, s}] &=rX_{r-s,\, 0}     & [X_{1,\,-1}, ~~ W_{0,\, s}] &=2X_{-s,\, 0}\\
    [X_{r,\, 0}, ~~ W_{0,\, 0}] &=rX_{r,\, 0}       & [X_{1,\,-1}, ~~ W_{r,\, 0}] &=-2X_{r,\, 0}\\
    [X_{r,\, 0}, ~~ W_{1,\,1}]  &=(r-2)X_{r,\, 0}   & [X_{1,\,-1}, ~~ W_{1,\, 1}] &=-2X_{1,\, -1}
\end{align*}
and
\[
    [X_{r,\, 0}, ~~ W_{s,\,0}] = \begin{cases} r^2 X_{1,\, -1}, ~\qquad s=-r,\\
                                               \frac{r(r-s)}{r+s}X_{r+s,\, 0}, ~\, s\neq -r.
                                               \end{cases}
\]
It follows from (\ref{WLM}) and (\ref{YLM}) that
\begin{align*}
    [Y_{r,\, 0}, ~~ W_{s,\,0}]  &=-rY_{r-s,\, 0}        & [Y_{1,\, -1}, ~~ W_{0,\,s}] &=2Y_{s,\,0}\\
    [Y_{r,\, 0}, ~~ W_{0,\,0}]  &= -rY_{r,\,0}          & [Y_{1,\, -1}, ~~ W_{r,\, 0}]&=-2Y_{-r,\, 0}\\
    [Y_{r,\, 0}, ~~ W_{1,\,1}]  &=(2-r)Y_{r,\, 0}       & [Y_{1,\, -1}, ~~ W_{1,\,1}] &=2Y_{1,\,-1}\\
\end{align*}
and
\[
    [Y_{r,\, 0}, ~~ W_{0,\,s}] = \begin{cases}
                                        -r^2Y_{1,\, -1}, \,\qquad s=-r,\\
                                        \frac{r(s-r)}{r+s}Y_{r+s,\, 0}, \quad s\neq -r.
                                        \end{cases}
\]
The following are obtained from (\ref{XLM}) and (\ref{YLM})
\begin{align*}
[X_{r,\, 0}, ~~ Y_{1,\, -1}]  &=2(W_{r,\,0}-W_{0,\, -r})\\
[X_{1,\, -1}, ~~ Y_{s,\, 0}]  &=-2(W_{-s,\, 0}-W_{0,\, s})\\
[X_{1,\, -1}, Y_{1,\, -1}]    &=-4(W_{0,\,0}-W_{1,\, 1})\\
[X_{r,\, 0}, ~~  Y_{s,\, 0}]  &=\begin{cases}\frac{rs}{r-s}(W_{r-s,\, 0}-W_{0,\,s-r}), ~\quad r\neq s,\\
                                                r^2(W_{1,\, 1}-W_{0,\, 0}),\qquad\quad\quad  r=s,
                                                \end{cases}
\end{align*}
So, the multiplication of the basis elements of $\{p_r,\, q_r,\, x_r,\, z_r\mid r\in\mathbb Z\}$ satisfies (17a)$-$(17e).
\hfill$\Box$
\end{proof}

We are now in a position to study the structure of $ad A^{\delta}_\omega$ and its connection to the representations of Witt algebras.
Define four subspaces of ad$A^{\delta}_\omega$ by
\[
    W=\sum\limits_{r\in \mathbb Z}\mathbb Fp_r, \quad  V_1=\sum\limits_{r\in \mathbb Z}\mathbb Fq_r,
    \quad V_2=\sum\limits_{r\in \mathbb Z}\mathbb Fx_r, \quad V_3=\sum\limits_{r\in \mathbb Z}\mathbb Fz_r.
\]

\begin{theorem}
If ch $\mathbb F\neq 2$, then

{\rm (a)} $W$ is a simple Lie algebra isomorphic to a Witt algebra, and $V=V_1 \oplus V_2 \oplus V_3$ is a Lie algebra isomorphic to the simple Lie algebra $sl(2, \mathbb F)\otimes \mathbb F[t, t^{-1}]$.

{\rm (b)} The inner derivation algebra {\rm ad}$ A^{\delta}_{\omega}$ is an indecomposable Lie algebra, and it can
be decomposed into the semi-direct sum of subalgebras $W$ and $V$. Moreover, $V$ is the only minimal ideal of ${\rm ad}A^{\delta}_\omega$.

{\rm (c)} For $i=1, 2, 3$, define $\rho_i$ to be the map from $W$ to $V_i$ by
\[
    \rho_i(p_r)=\begin{cases} q_r, \quad\text{if } i=1, \\
                             x_r, \quad\text{if } i=2, \\
                              z_r, \quad\text{if } i=3,
                \end{cases}
\]
where $r\in \mathbb Z$. Then $(V_i,\,\rho_i)$ is an irreducible intermediate series $W$-module isomorphic to the regular representation of $W$.
\end{theorem}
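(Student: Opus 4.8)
The plan is to read everything off the multiplication table of Theorem \ref{newbasis}, since the relations $[p_r,p_s]=(r-s)p_{r+s}$, $[p_r,q_s]=-sq_{r+s}$, $[p_r,x_s]=-sx_{r+s}$, $[p_r,z_s]=-sz_{r+s}$, $[q_r,x_s]=-2x_{r+s}$, $[q_r,z_s]=2z_{r+s}$, $[z_r,x_s]=q_{r+s}$ (all others zero) completely encode the Lie structure in the basis $\{p_r,q_r,x_r,z_r\}$. For part (a), the relation $[p_r,p_s]=(r-s)p_{r+s}$ is literally the Witt bracket, so $p_r\mapsto L_r$ is an isomorphism of $W$ onto a Witt algebra, and simplicity of $W$ is the standard simplicity of the Witt algebra. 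The relations among $q,x,z$ all land in $\mathrm{span}\{q,x,z\}$, so $V=V_1\oplus V_2\oplus V_3$ is a subalgebra; to identify it with the loop algebra I define $\Phi\colon V\to sl(2,\mathbb F)\otimes\mathbb F[t,t^{-1}]$ by $q_r\mapsto h\otimes t^r,\ z_r\mapsto e\otimes t^r,\ x_r\mapsto f\otimes t^r$, where $[h,e]=2e,\ [h,f]=-2f,\ [e,f]=h$. Then the table matches termwise the loop relation $[a\otimes t^r,b\otimes t^s]=[a,b]\otimes t^{r+s}$ (for instance $[q_r,z_s]=2z_{r+s}$ answers to $[h,e]=2e$ and $[z_r,x_s]=q_{r+s}$ to $[e,f]=h$), so $\Phi$ is a bijective homomorphism.

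For part (b), the relations $[p_r,q_s],[p_r,x_s],[p_r,z_s]\in V$ show $[W,V]\subseteq V$, so $V$ is an ideal, $W$ is a subalgebra, and since the two families of basis vectors are disjoint and together span, we get the semidirect decomposition $\mathrm{ad}A^{\delta}_\omega=W\ltimes V$. The engine for the rest is that $\mathrm{ad}(p_0)$ acts on every degree-$s$ basis vector $p_s,q_s,x_s,z_s$ by the single scalar $-s$; hence $\mathrm{ad}(p_0)$ is diagonalizable with the pairwise distinct integer eigenvalues $\{-s\}$ (here characteristic zero is used), and any ideal $J$, being $\mathrm{ad}(p_0)$-invariant, decomposes as $J=\bigoplus_s(J\cap\mathfrak g_s)$ with $\mathfrak g_s=\mathbb Fp_s\oplus\mathbb Fq_s\oplus\mathbb Fx_s\oplus\mathbb Fz_s$. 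In other words, every ideal is $\mathbb Z$-graded, which is the key simplification.

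Given a nonzero ideal $J$, I first push it into $V$: choosing $0\ne w=\alpha p_s+aq_s+bx_s+cz_s\in J\cap\mathfrak g_s$ and bracketing with $q_0$, $z_0$, or $q_1$ strips off the $p$-component and yields a nonzero element of $J\cap V$. It then suffices to prove that any nonzero graded ideal $J\subseteq V$ equals $V$. Picking $0\ne v=aq_{s_0}+bx_{s_0}+cz_{s_0}\in J$, I note that $\Phi(v)$ is a nonzero element of $sl(2)\otimes t^{s_0}$, and bracketing with the degree-$0$ copy $\{q_0,x_0,z_0\}\cong sl_2$ generates, by irreducibility of the adjoint $sl_2$-module (where $\mathrm{ch}\,\mathbb F\neq2$ enters), the whole slice $\mathbb Fq_{s_0}\oplus\mathbb Fx_{s_0}\oplus\mathbb Fz_{s_0}$. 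From there I reach all degrees: if $s_0\neq0$ I use $[p_r,q_{s_0}]=-s_0q_{r+s_0}$ to obtain every $q_n$, then $[q_n,x_0]$ and $[q_n,z_0]$ to obtain every $x_n,z_n$; if $s_0=0$ I instead bracket $x_0\in J$ with the outside elements $q_s$ via $[q_s,x_0]=-2x_s$ to obtain all $x_s$, and then recover all $q_s,z_s$ from $[z_0,x_s]=q_s$ and $[q_s,z_0]=2z_s$. Either way $V\subseteq J$, so $V$ is a minimal ideal and every nonzero ideal contains it, whence $V$ is the unique minimal ideal; and a splitting $\mathrm{ad}A^{\delta}_\omega=I_1\oplus I_2$ into nonzero ideals would force $V\subseteq I_1\cap I_2=0$, giving indecomposability. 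The careful bookkeeping of this generation argument, especially the degree-$0$ case where $\mathrm{ad}(p_r)$ annihilates the degree-$0$ part of $V$ and one must bracket with outside $q_s$'s, is the step I expect to be the main obstacle.

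For part (c), the relations $[p_r,q_s]=-sq_{r+s}$, $[p_r,x_s]=-sx_{r+s}$, $[p_r,z_s]=-sz_{r+s}$ show each $V_i$ is a $W$-submodule with one and the same action $p_r\cdot w_s=-sw_{r+s}$ for $w\in\{q,x,z\}$; consequently the degree-preserving maps $q_r\mapsto x_r\mapsto z_r$ are $W$-module isomorphisms, and the $\rho_i$ exhibit $V_1\cong V_2\cong V_3$. Since $\mathrm{ad}(p_0)$ acts on $w_s$ by the scalar $-s$, the weights are distinct and each weight space is one-dimensional, so $V_i$ is of intermediate series type. Irreducibility is the same grading/connectivity argument as in (b): a nonzero submodule is $p_0$-stable, hence a span of the $w_s$, and $p_r\cdot w_s=-sw_{r+s}$ (using $p_{-s}\cdot w_s$ to capture the index-$0$ vector) links every pair of basis vectors, leaving no proper nonzero submodule. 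Finally, the identification of $(V_i,\rho_i)$ with the regular representation of $W$ reduces to checking that $\rho_i$ intertwines the two $W$-actions, which is read directly off the table; this last step is routine once the structure constant $-s$ has been matched.
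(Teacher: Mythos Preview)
Your treatment of parts (a) and (b) is correct. For (a) you construct exactly the isomorphism $q_r\mapsto h\otimes t^r$, $z_r\mapsto e\otimes t^r$, $x_r\mapsto f\otimes t^r$ that the paper uses. For (b) the paper merely declares the result ``clear''; your argument via the $\mathbb Z$-grading by $\mathrm{ad}(p_0)$-eigenspaces, together with the $sl_2$-irreducibility of each graded slice of $V$ and the degree-shifting brackets, is a genuine and correct proof that goes well beyond what the paper supplies.

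Part (c), however, contains a real gap, and it cannot be closed: the claim is false given the relations of Theorem~\ref{newbasis}. From $[p_r,w_s]=-sw_{r+s}$ (for $w\in\{q,x,z\}$) one gets $p_r\cdot w_0=0$ for every $r$, so the line $\mathbb F w_0$ is a nonzero proper $W$-submodule of $V_i$; thus $V_i$ is \emph{not} irreducible. Your connectivity argument correctly explains how to reach $w_0$ from any $w_s$ with $s\neq0$ (via $p_{-s}\cdot w_s$), but overlooks that there is no bracket taking you \emph{out} of $w_0$. For the same reason $\rho_i$ is not a $W$-module isomorphism: $\rho_i([p_r,p_s])=(r-s)w_{r+s}$ whereas $p_r\cdot\rho_i(p_s)=-sw_{r+s}$, and these differ whenever $r\neq0$, so the ``intertwining check read directly off the table'' actually fails. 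In standard Witt/Virasoro terminology each $V_i$ is the reducible intermediate-series module $V_{0,0}$ (Laurent polynomials with $L_m\cdot t^n=-nt^{m+n}$), whose unique proper nonzero submodule is the constants. The paper's own proof of (c) simply asserts irreducibility and the isomorphism without verifying them, so the defect lies in the statement itself rather than in your method.
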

\begin{proof}
The Lie algebra $W=\sum\limits_{r\in \mathbb Z} \mathbb F p_r $ is isomorphic to a Witt algebra, since ${[}p_r, p_s]= (r-s)p_{r+s}$ for all $r, s\in \mathbb Z$.

We next show that the Lie algebra $V$ is isomorphic to $sl(2, \mathbb F)\otimes \mathbb F[t, t^{-1}]$. Let $sl(2, \mathbb F) = \mathbb F e\oplus \mathbb Fh \otimes \mathbb Ff$.
Define the map
$
    \sigma: V\rightarrow sl(2, \mathbb F)\otimes \mathbb F[t, t^{-1}]
$
by
$$
    \sigma(q_r)=h\otimes t^r, \quad \sigma(z_r)=e\otimes t^r,  \quad \sigma(x_r)=f\otimes t^r, \quad\text{for all } r\in \mathbb Z.
$$
From Theorem \ref{newbasis} it follows that
\begin{align*}
    [\sigma(q_r), \sigma(q_s)]  &=\sigma([q_r, q_s])=0,\\
    [\sigma(x_r), \sigma(x_s)]  &=\sigma([x_s, x_s])=0,\\
    [\sigma(z_r), \sigma(z_s)]  &=\sigma([z_r, z_s])=0,\\
    [\sigma(q_r), \sigma(z_s)]  &=[h\otimes t^r, e\otimes t^s])=~2e\otimes t^{r+s}~=2\sigma (z_{r+s})= \sigma([q_r, z_s]),\\
    [\sigma(q_r), \sigma(x_s)]  &=[h\otimes t^r, f\otimes t^s])=-2f\otimes t^{r+s}=2\sigma (x_{r+s})= \sigma([q_r, x_s]),\\
    [\sigma(z_r), \sigma(x_s)]  &=[e\otimes t^r, f\otimes t^s])=h\otimes t^{r+s}=\sigma (q_{r+s})= \sigma([z_r, x_s]).
\end{align*}
This proves result (a). The result (b) is clear.

We now prove (c). It suffices to show the result for $i=1$, since the other two cases are similar. The identity (17a) shows that $V_1$ is an irreducible $W$-module. This module is an intermediate series module because
\[
    V_1 = \bigoplus_{r\in\mathbb Z}V_{\lambda_r},
\]
where each weight space $V_{\lambda_r} = \mathbb F q_r$ of weight $-r$ is of dimension $1$ by identity (17a). From the definitions of $W$ and $V_1$, it is easily seen that $\rho_1$ is an isomorphism, so $(V_1, \rho)$ is isomorphic to the regular representation of the Witt algebra $W$.
\hfill$\Box$
\end{proof}

\section{ Representations of $A^{\delta}_{\omega}$ and ad$A^{\delta}_{\omega}$}
We first characterize the regular representation of the $3$- Lie algebra $A^{\delta}_{\omega}$.
\begin{theorem} Let $V=A$ as a vector space. The regular representation $(V,\, {\rm ad})$ of the $3$-Lie algebra $A^{\delta}_{\omega}$ is a Harish-Chandra module.
\end{theorem}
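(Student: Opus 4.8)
The plan is to exhibit an explicit weight space decomposition of $V=A$ under a suitable Cartan subalgebra $H$ of $A_\omega^\delta$ and verify that each weight space is finite-dimensional. First I would identify a Cartan subalgebra. The natural candidate is the span of the degree-zero elements together with a grading element; concretely, since $\delta$ acts as a grading, I expect $H$ to be spanned by elements whose left multiplications act diagonally on the basis $\{L_r, M_r\}$. Examining (\ref{WLM}), the operator $W_{0,\,0}=\mathrm{ad}(L_0,M_0)$ sends $L_t\mapsto -tL_t$ and $M_t\mapsto tM_t$, so it is diagonal in the given basis. Thus I would take $H$ to contain $L_0$ and $M_0$ (and check it is a maximal nilpotent self-normalizing subalgebra), so that $\mathrm{ad}(h_1,h_2)$ for $h_1,h_2\in H$ acts diagonally on every $L_r$ and $M_r$.

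Next I would compute the weights explicitly. For the regular representation $\rho=\mathrm{ad}$, the weight of a basis vector $v$ is the linear functional $\lambda:H\wedge H\to\mathbb F$ with $\mathrm{ad}(h_1,h_2)v=\lambda(h_1,h_2)v$. Using (\ref{WLM}) one reads off that $\mathrm{ad}(L_0,M_0)L_t=-tL_t$ and $\mathrm{ad}(L_0,M_0)M_t=tM_t$, so $L_t$ and $M_t$ have opposite eigenvalues under this single operator. I would then define $\lambda_r$ by its values on a spanning set of $H\wedge H$ and show that $V=\bigoplus_\lambda V_\lambda$, with $V_{\lambda_r}$ consisting of $L_r$ together with $M_{-r}$ (since these share the same eigenvalue $-r$ under $\mathrm{ad}(L_0,M_0)$). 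The key point for the Harish-Chandra conclusion is that each such weight space is finite-dimensional: generically $V_{\lambda_r}=\mathbb F L_r\oplus\mathbb F M_{-r}$ is two-dimensional, in contrast to the first $3$-Lie algebra where the weight-zero space was the infinite-dimensional Cartan subalgebra.

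The main obstacle I anticipate is verifying that $H$ is genuinely a Cartan subalgebra and, more delicately, that the operators $\mathrm{ad}(L_0,M_0)$ and the other generators of $\mathrm{ad}(H\wedge H)$ together separate $L_r$ from $M_{-r}$ finely enough to keep every weight space finite-dimensional. A single diagonal operator only distinguishes weights up to sign of the index, so $L_r$ and $M_{-r}$ collapse into one weight space unless a second independent diagonal element of $\mathrm{ad}(H\wedge H)$ splits them. I would therefore search within $H\wedge H$ for a second operator diagonalizing the basis with eigenvalues that do distinguish $L_r$ from $M_{-r}$; if such an operator exists, each weight space becomes one-dimensional and the module is in fact of intermediate series, while if not, each weight space is exactly two-dimensional. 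Either way the dimension is bounded, so the Harish-Chandra property follows. The proof concludes by assembling the decomposition $V=\bigoplus_r V_{\lambda_r}$ and noting that $\dim V_{\lambda_r}\le 2<\infty$ for all $r$, which is precisely the definition of a Harish-Chandra module given in section 2.
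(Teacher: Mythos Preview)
Your proposal is correct and follows essentially the same route as the paper: take $H=\mathbb F L_0\oplus\mathbb F M_0$, observe that $\mathrm{ad}(L_0,M_0)$ acts diagonally with $L_t\mapsto -tL_t$, $M_t\mapsto tM_t$, and conclude that $V_{\lambda_t}=\mathbb F L_t\oplus\mathbb F M_{-t}$ is two-dimensional for every $t$. Your hedging about a possible second diagonal operator is unnecessary, since $\dim H=2$ forces $\dim(H\wedge H)=1$; there is no independent second operator, and the paper simply records the weight spaces as exactly two-dimensional.
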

\begin{proof} Since $V=A$ as a vector space, $V=\bigoplus_{r\in\mathbb Z} (\mathbb F L_r \oplus \mathbb F M_r)$.
By Theorem 4.1 we have that
the Cartan subalgebra of $A^{\delta}_{\omega}$ is
$$
    H=\mathbb F L_0\oplus \mathbb F M_0,
$$
and
\[
    {\rm ad}(L_0, M_0)(L_t) = -tL_t \quad\text{and}\quad  {\rm ad}(L_0, M_0)(M_t) = tM_t \quad\text{for all } t\in\mathbb Z,
\]
If $t\in \mathbb Z$, define a linear function
$\lambda_t: H\wedge H\rightarrow \mathbb F$ by
\[
    \lambda_t(L_0, M_0) = -t.
\]
A simple calculation yields that $V = \bigoplus_{\lambda_t\in (H\wedge H)^*} V_{\lambda_t}$ where
\[
    V_{\lambda_t}=\mathbb F L_t\oplus \mathbb F M_{-t}
\]
are the weight spaces of dimension 2. The desired result follows. \hfill$\Box$
\end{proof}

We then describe the natural representation of the inner derivation algebra ${\rm ad}A^{\delta}_{\omega}$. Let $V$ be the vector space $A$ over $\mathbb F$. Recall that the natural module $(V, \rho)$ of ad$A^{\delta}_{\omega}$ on $V$ is defined by
\begin{equation*}\label{na}
    \rho(D)(z) = D(z) \quad\text{for all } D\in {\rm ad}A^{\delta}_{\omega}, ~z\in V.
\end{equation*}
\begin{theorem}
    The natural module $(V, \rho)$ of the derivation algebra ${\rm ad}A^{\delta}_{\omega}$ is an intermediate series module.
 \end{theorem}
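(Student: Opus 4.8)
The plan is to realize $(V,\rho)$ as a weight module over a suitable Cartan subalgebra $H$ of ${\rm ad}A^{\delta}_\omega$ and then to check that every weight space is at most one-dimensional. The conceptual crux is that $V=A$, viewed only as a module over the Witt subalgebra $W=\sum_{r}\mathbb F p_r$, is \emph{not} of intermediate series: since $p_0$ acts on $L_t$ by $-t$ and on $M_t$ by $t$, the $p_0$-eigenspace of eigenvalue $\mu$ contains both $L_{-\mu}$ and $M_\mu$ and is therefore two-dimensional. A second, simultaneously diagonalizable operator is needed to split these two lines apart, and the natural candidate is $q_0$.

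First I would pin down the Cartan subalgebra. I claim $H=\mathbb F p_0\oplus \mathbb F q_0$ works. It is abelian, since the relations of Theorem~\ref{newbasis} give $[p_0,q_0]=0$, hence nilpotent. To verify that $H$ is self-normalizing, write a general element $u=\sum_r(a_rp_r+b_rq_r+c_rx_r+d_rz_r)$ and use $[p_r,p_0]=rp_r$, $[q_r,p_0]=rq_r$, $[x_r,p_0]=rx_r$, $[z_r,p_0]=rz_r$ to see that $[u,p_0]\in H$ forces every coefficient with $r\neq 0$ to vanish, leaving $u=a_0p_0+b_0q_0+c_0x_0+d_0z_0$. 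Then $[u,q_0]=2c_0x_0-2d_0z_0$, and here the hypothesis ${\rm ch}\,\mathbb F\neq 2$ forces $c_0=d_0=0$, so $u\in H$. Thus $H$ is a Cartan subalgebra.

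Next I would record the diagonal action of $H$. From $p_0=W_{0,0}$ and $q_0=W_{0,0}-W_{1,1}$ together with (\ref{WLM}) one computes
\[
    p_0(L_t)=-tL_t,\quad p_0(M_t)=tM_t,\quad q_0(L_t)=-L_t,\quad q_0(M_t)=M_t .
\]
Hence every basis vector of $V$ is a joint eigenvector for $H$, so $V$ is a weight module. For $t\in\mathbb Z$ define $\alpha_t,\beta_t\in H^*$ by $\alpha_t(p_0)=-t,\ \alpha_t(q_0)=-1$ and $\beta_t(p_0)=t,\ \beta_t(q_0)=1$; these are the weights of $L_t$ and $M_t$. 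The value on $q_0$ is $-1$ on every $L_t$ and $+1$ on every $M_t$, so it separates the two families, while the value on $p_0$ distinguishes the vectors within each family. Consequently the functionals $\{\alpha_t,\beta_t\mid t\in\mathbb Z\}$ are pairwise distinct, and the weight space decomposition $V=\big(\bigoplus_t\mathbb F L_t\big)\oplus\big(\bigoplus_t\mathbb F M_t\big)=\bigoplus_\lambda V_\lambda$ has each $V_\lambda$ spanned by a single basis vector.

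Therefore $(V,\rho)$ is a weight module all of whose weight spaces are one-dimensional, which is exactly an intermediate series module, completing the argument. The only genuinely delicate point is the correct choice of $H$: one must not use the Witt Cartan line $\mathbb F p_0$ alone, which yields two-dimensional weight spaces, but enlarge it by $q_0$, the degree-zero element of the $sl(2,\mathbb F)$-Cartan sitting inside $V_1$ (so that $\sigma(q_0)=h\otimes 1$). Verifying that this enlarged $H$ remains self-normalizing is the one computation that requires care, and it is precisely where ${\rm ch}\,\mathbb F\neq 2$ enters.
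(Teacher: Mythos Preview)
Your proposal is correct and follows essentially the same route as the paper: identify $H=\mathbb F p_0\oplus\mathbb F q_0$ as the Cartan subalgebra, compute the diagonal action $p_0(L_t)=-tL_t$, $p_0(M_t)=tM_t$, $q_0(L_t)=-L_t$, $q_0(M_t)=M_t$, and observe that the resulting weight spaces are one-dimensional. The paper simply asserts that $H$ is the Cartan subalgebra by reference to Theorem~\ref{newbasis}, whereas you actually verify self-normalization (and correctly note where ${\rm ch}\,\mathbb F\neq 2$ is used); your added motivation about why $\mathbb F p_0$ alone is insufficient is a nice touch but not in the paper.
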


\begin{proof}
From Theorem \ref{newbasis} it follows that $H=\mathbb F p_0\oplus \mathbb F q_0$ is the Cartan subalgebra of the Lie algebra ${\rm ad}A^{\delta}_{\omega}$. Under the natural action of inner derivations, for all  $t\in\mathbb \mathbb Z$, by (\ref{WLM}) and (\ref{zeros}) we obtain
\[
    p_0(L_t)=-tL_t,\quad p_0(M_t)=tM_t,\quad q_0(L_t)=-L_t,\quad\text{and}\quad q_0(M_t)=M_t.
\]
For each $t\in \mathbb Z$, define a linear map $\lambda_t: H\wedge H\rightarrow \mathbb F$ by
$\lambda_t(p_0)=t$ and $\lambda_t(q_0)=1$. Then we obtain weight spaces
\[
    V_{\lambda_t}=\mathbb F M_t \quad\text{and}\quad V_{-\lambda_t}=\mathbb F L_t.
\]
Furthermore, $V=\bigoplus\limits_{t\in \mathbb Z}(V_{\lambda_t}\oplus V_{-\lambda_t})$.
  But then $\dim V_{\lambda_t}=\dim V_{-\lambda_t}=1$ for all $t\in \mathbb Z$, so the natural module $(V, \rho)$ is an intermediate series module of ${\rm ad}A^{\delta}_{\omega}$.
\hfill$\Box$
\end{proof}

{\bf Acknowledgements} {
This work was supported in part by the National Natural Science
Foundation of China (Grant No. 11371245) and the Natural Science Foundation of Hebei
Province, China (Grant No. A2014201006).}

\vspace{5mm}
\noindent Ruipu Bai\\
College of Mathematica and Information Science\\
Hebei University\\
Baoding, Hebei, China, 071002\\
Email: bairp1@yahoo.com.cn\\
\\
Zhenheng Li ({\bf Correspondence Author})\\
College of Mathematica and Information Science\\
Hebei University\\
Baoding, Hebei, China, 071002\\
Email: zhli@hbu.edu.cn\\
\\
Weidong Wang\\
College of Mathematica and Information Science\\
Hebei University\\
Baoding, Hebei, China, 071002\\
Email: phpxx@126.cn\\

\end{document}